\numberwithin{equation}{section}
\newcommand{\N}{{\mathbb{N}}}
\newcommand{\Z}{\mathbb{Z}}
\newcommand{\R}{\mathbb{R}}
\newcommand{\C}{\mathbb{C}}
\newcommand{\TT}{{\mathbb{T}}}
\newcommand{\LL}{{\mathbb{L}}}
\newcommand{\T}{{\mathbb{T}}}
\theoremstyle{plain}
\newtheorem{theorem}{Theorem}[section]
\newtheorem{proposition}[theorem]{Proposition}
\newtheorem{corollary}[theorem]{Corollary}
\newtheorem{definition}[theorem]{Definition}
\theoremstyle{definition}
\newtheorem{remark}[theorem]{Remark}
\numberwithin{equation}{section}
\begin{document}
	
\title{Systems of differential operators in \\ time-periodic Gelfand-Shilov spaces}
	
	\date{}		

\author{
	Fernando de \'Avila Silva, 
	Marco Cappiello, 
	Alexandre Kirilov 
}

	\maketitle

\begin{abstract}
This paper explores the global properties of time-independent systems of operators in the framework of Gelfand-Shilov spaces. Our main results provide both necessary and sufficient conditions for global solvability and global hypoellipticity, based on analysis of the symbols of operators. We also present a class of time-dependent operators whose solvability and hypoellipticity are linked to the same properties of an associated time-independent system, albeit with a loss of regularity for temporal variables.	
\end{abstract}

\maketitle

\noindent
\textit{Keywords:} Gelfand-Shilov spaces, systems of periodic equations, global hypoellipticity, global solvability, Fourier analysis
\\
\noindent
\textit{Mathematics Subject Classification 2020:} Primary: 35N05, 35A01. Secondary 35H10, 46F05

\section{Introduction}

This paper aims to discuss existence and regularity of solution for systems of the type
\begin{equation}\label{general-const-system}
	L_r u=f_r, \qquad r=1,\ldots,\ell,
\end{equation} with
\begin{equation}\label{general-const-system2}	L_r = Q_r(D_t)+ d_rP(x,D_x), \qquad r=1,\ldots,\ell,
\end{equation}
where $d_r \in \C$, and $f_r$ are assigned functions or distributions. The operator
$$
Q_r(D_t)=\sum_{|\alpha|\leq k_r} c_{\alpha,r} D_t^\alpha
$$
is a constant coefficient differential operator of order $k_r$ on the torus $\TT^m$, with $\alpha= (\alpha_1, \ldots, \alpha_n) \in \mathbb{N}_0^m$,  $c_{\alpha,r} \in \C,$ and $D_t^\alpha = D_1^{\alpha_1} D_2^{\alpha_2}\cdots D_m^{\alpha_m}$, whereas $P(x,D_x)$
is a normal differential operator of the form
\begin{equation}\label{P-intro}
	P = P(x,D) = \sum_{|\alpha| + |\beta| \leq M} c_{\alpha,\beta} x^{\beta} D_x^{\alpha}, \ c_{\alpha,\beta} \in \C,
\end{equation}
of order $M\geq 2$, satisfying  the global ellipticity property 
\begin{equation}\label{P-elliptic}
	p_M(x,\xi) = \sum_{|\alpha| + |\beta| = M} c_{\alpha,\beta} x^{\beta} \xi^{\alpha} \neq 0, \quad (x,\xi) \neq (0,0).
\end{equation}

The normal condition and property \eqref{P-elliptic} imply that $P$ has a discrete spectrum consisting in a sequence of real eigenvalues $\lambda_j$ such that $|\lambda_j| \to +\infty$ for $j \to +\infty.$ The corresponding eigenfunctions $\varphi_j$ form an orthonormal basis of $L^2(\R^n)$. Moreover, the following asymptotic Weyl formula holds
\begin{equation}\label{weyl}
	|\lambda_j| \sim  \rho j^{M/2n}, \ \textrm{ as } \ j \to \infty,
\end{equation}
for some positive constant $\rho$. 

The most relevant example is the Harmonic oscillator $P(x, D_x)= -\Delta+|x|^2, $ where $\Delta$ denotes the standard Laplace operator on $\R^n$, and its powers $(-\Delta+|x|^2)^M$, with $M \in \N, M \geq 2.$ Operators of the form \eqref{P-intro}, \eqref{P-elliptic} and their pseudo-differential generalizations have been first studied on the Schwartz space $\mathscr{S}(\R^n)$ of smooth rapidly decreasing functions and on the dual space of tempered distributions $\mathscr{S}'(\R^n)$, cf. \cite{Shu87}. More recently, it was noticed that solutions to equations involving these operators possess stronger regularity and decay properties. For instance, the eigenfunctions of the Harmonic oscillator are the well-known Hermite functions that admit a Gaussian decay at infinity, they are analytic on $\R^n$ and can be extended to entire functions on $\C^n$. A more natural functional setting is then given by the Gelfand-Shilov spaces of type $\mathscr{S}$ introduced in \cite{GelShi67book3, GelShi68book2} and defined as follows.

Given $\mu >0, \nu >0$, the Gelfand-Shilov space $\mathcal{S}^\mu_\nu(\R^n)$ is defined as the space of all $f \in C^\infty(\R^n)$ such that
\begin{equation*}
	\sup_{\alpha, \beta \in \N^n} \sup_{x \in \R^n} A^{-|\alpha+\beta|} \alpha!^{-\nu}\beta!^{-\mu}
	|x^\alpha \partial_x^\beta f(x)| <+\infty 
\end{equation*}
for some $A>0$, or equivalently,
\begin{equation*}
	\sup_{\beta \in \N^n} \sup_{x \in \R^n} C^{-|\beta|} \beta!^{-\mu}\exp( c|x|^{1/\nu})
	| \partial_x^\beta f(x)| <+\infty 
\end{equation*}
for some $C,c>0$.
Elements of $\mathcal{S}^\mu_\nu(\R^n)$ are then smooth functions presenting uniform analytic or Gevrey estimates on $\R^n$ and admitting an exponential decay at infinity. In particular, for $\mu <1$ a function $f \in \mathcal{S}^\mu_{\nu}(\R^n)$  admits an entire extension $\tilde{f}: \C^n \to \C$ satisfying exponential estimates of the form
$$|\partial_z^\beta \tilde f(z) | \leq C^{|\beta|+1} \beta!^\mu \exp \left( -a |\Re z |^{\frac1{\nu}}+ b |\Im z|^{\frac1{1-\mu}}\right), \ z \in \C^n,$$
for some positive constants $C, a,b$ independent of $\beta$. We also recall that $\mathcal{S}^\mu_\nu(\R^n) \neq \{0\}$ if and only if $\mu +\nu \geq 1.$
The elements of the dual space $(\mathcal{S}^\mu_\nu)'(\R^n)$ are commonly known as \textit{temperate ultradistributions}, cf. \cite{Pil88}.

In the last two decades, Gelfand-Shilov spaces have been employed frequently in the study of microlocal and time-frequency analysis with many applications to partial differential equations, see for instance \cite{Ari24,AriAscCap22,AscCap08,AscCap19,CapGraRod06,CapGraRod10jam,CapGraRod10cpde,CapPilPra16,CorNicRod15rmi,CorNicRod15tams,NicRod10book,Pra13} and the references quoted therein. 
Concerning in particular the operators in \eqref{P-intro}, \eqref{P-elliptic}, we mention the hypoellipticity results in \cite{CapGraRod06,CapGraRod10jam} which show that the solutions $u \in \mathscr{S}'(\R^n)$ of the equation $Pu=f \in \mathcal{S}_\nu^\mu(\R^n)$ actually belong to $\mathcal{S}_\nu^\mu(\R^n)$. In particular, the eigenfunctions of $P$ are in $\mathcal{S}^{1/2}_{1/2}(\R^n)$. Recently, these spaces have been also characterized in terms of eigenfunction expansions, see \cite{CapGraPilRod19,GraPilRod11}. Given the above considerations, it is natural to study also systems of the form \eqref{general-const-system}, \eqref{general-const-system2} in the frame of Gelfand-Shilov spaces.
For simplicity, we shall limit ourselves to consider symmetric spaces $\mathcal{S}_\mu^\mu(\R^n)$ with $\mu \geq 1/2$. To use these spaces in the periodic setting defined by \eqref{general-const-system}, \eqref{general-const-system2} we need to consider and adapt a variant of these spaces introduced in \cite{AviCap22} for the case of the one-dimensional torus. Then, we shall consider the space $\mathcal{S}_{\sigma,\mu}(\TT^m \times \R^n)$ with $\sigma \geq 1, \mu \geq 1/2$, ($\mathcal{S}_{\sigma,\mu}$ in short), as the space of all smooth functions on $\TT^m \times \R^n$ such that
\begin{equation}\label{firstnorm}
	|u|_{\sigma, \mu, C}:=
	\sup_{\alpha, \beta\in \Z^n_+, \gamma \in \Z^m_+}C^{-|\alpha|-|\beta|-|\gamma|}\gamma!^{-\sigma} (\alpha!\beta!)^{-\mu}\sup_{(t,x) \in \mathbb{T}^m\times \R^n} |x^\alpha \partial_x^\beta \partial_t^\gamma u(t,x)|
\end{equation}
is finite for some positive constant $C$, and we study the global hypoellipticity and solvability of system \eqref{general-const-system} in this setting. Notice that the elements of $\mathcal{S}_{\sigma, \mu}(\TT^m \times \R^n)$   belong to $\mathcal{S}_{\mu}^{\mu} (\R^n)$, cf. \cite{GelShi67book3,GelShi68book2}, with respect to the variable $x$, while are Gevrey regular on $t \in \TT^m$. The first step is to give a characterization of this space in terms of eigenfunction expansions adapting an analogous result proved in \cite{CapGraPilRod19,GraPilRod11} for classical Gelfand-Shilov spaces and in \cite{AviCap22} for time-periodic Gelfand-Shilov spaces on $\TT \times \R^n$. Namely, the orthonormal basis of eigenfunctions $\{\varphi_j \}_{j \in \N}$ of $P$ allows to write any $u \in \mathcal{S}_{\sigma,\mu}$ (respectively $u \in \mathcal{S}'_{\sigma,\mu}$ ) as the sum of a Fourier series 
\begin{equation*}
	u(t,x) = \sum_{j \in \N} u_j(t) \varphi_j(x),
\end{equation*} where $u_j(t)$ is a sequence of Gevrey functions (respectively distributions) on the $\TT^m$ satisfying suitable exponential estimates.
This allows to discretize the equations $L_r u=f_r$ and to apply 
the typical arguments of the analysis of global hypoellipticity and solvability on the torus.
We note that the study of systems of periodic differential operators has been receiving significant attention more recently. This can be seen in the references \cite{AviMed21, BerKir07, BerKirNunZan12, BerMedZan12, BerMedZan21}. Some of our results and techniques of proof were inspired by ideas used in articles such as \cite{AriKirMed19, AriLes21, Les21}, as well as results developed in Lie groups, see \cite{KirMorRuz20, KirMorRuz21jfa,KirMorRuz21zaa,KirMorRuz22}.
\medskip

After these preliminaries, let us go back to the system \eqref{general-const-system}, \eqref{general-const-system2}. Denoting 
$$
\mathbb{L}  \doteq (L_1, L_2, \ldots, L_\ell),
$$
and fixed $f_1, \ldots, f_\ell \in \mathcal{S}_{\sigma,\mu}$ (or in $\mathcal{S}'_{\sigma,\mu}$), we shall consider the vector-valued function (or distribution)  $F \doteq (f_1,f_2,\ldots,f_\ell)$. Then we can write our system as
$$\LL u(t,x)=F(t,x)$$
or by components as
$$ L_ru=f_r, \ \ r=1,2,\ldots,\ell.$$
\indent

For every $r \in \{1,\ldots, \ell \}$ the symbol of the operator $L_r$ is defined by
\begin{equation*}
	\sigma_{L_r}(\tau,j) = \sum_{|\alpha|\leq k_r} c_{\alpha,r} \tau^\alpha + d_r\lambda_j, \quad r=1,\ldots,\ell,
\end{equation*}
where $\tau=(\tau_1,\tau_2,\ldots,\tau_m)\in\mathbb{Z}^m$, $j\in\mathbb{N}$, and $r=1,\ldots,\ell$. Furthermore, we denote the symbol of the system $\mathbb{L}$ as
\begin{equation*}
	\sigma_{\mathbb{L}}(\tau,j) = (\sigma_{L_1}(\tau,j), \sigma_{L_2}(\tau,j), \ldots, \sigma_{L_\ell}(\tau,j)),
\end{equation*}
and we define
$$
\|\sigma_{\mathbb{L}}(\tau,j)\| \doteq \max_{1\leq r\leq \ell} |\sigma_{L_r}(\tau,j)|.
$$

Notice that the operators $L_r$ defined in \eqref{general-const-system} commute, i.e., for all $1\leq r,s\leq \ell$,
\begin{align*}
	[L_r,L_s] &= L_r L_s - L_s L_r = 0.
\end{align*}
	
	Thus, if $u=u(t,x)$ is a solution to the equations $L_ru=f_r$, for $r=1,\ldots, \ell$, then 
	$L_r f_s = L_rL_s u=L_sL_r u=L_s f_r,$ for all $1\leq r,s\leq \ell$. Therefore, for the system \eqref{general-const-system} to have a solution, it is necessary to ensure that 
	\begin{equation}\label{comp-cond-system}
		L_r f_s = L_s f_r, \quad \text{for } 1\leq r,s\leq \ell. 
	\end{equation}
	
	Moreover, If $u=u(t,x)$ is a solution of the system $L_ru=f_r$, $1\leq r\leq \ell$, then its Fourier coefficients satisfy
	$$ \sigma_{L_r}(\tau,j) \widehat{u_j}(\tau)=\widehat{f_{r,j}}(\tau), \quad 1\leq r \leq \ell, $$
	for all $\tau \in \mathbb{Z}^n$ and $j\in \mathbb{N}$. 
	Therefore, for each $r=1,\ldots,\ell$, we have
	\begin{equation}\label{comp-cond-op}
		\sigma_{L_r}(\tau,j)=0 \implies \widehat{f_{r,j}}(\tau) = 0.
	\end{equation}

By combining the two remarks above, we define the set
\begin{equation*}
	\mathscr{E}_\LL = \left\{F=(f_1, \ldots,f_\ell): f_r \in \mathcal{S}_{\sigma,\mu}' \mbox{ satisfies \eqref{comp-cond-system} and \eqref{comp-cond-op} for all } 1\leq r \leq \ell  \right\}.
\end{equation*}

Note that, if $F\not \in \mathscr{E}_\LL$, then there is no $u\in \mathcal{S}_{\sigma,\mu}'$ such that $\LL u = F$. The set $\mathscr{E}_\LL$ is called the set of admissible ultradistributions for the solvability of the system $\LL$.

\begin{definition}\label{GS-GH-definition}
	Given a system $\LL=(L_1, L_2, \ldots, L_\ell)$ in the form \eqref{general-const-system}, we say that:
	\begin{enumerate}
		\item $\LL$ is $\mathcal{S}_{\sigma,\mu}$-globally solvable when  $\LL(\mathcal{S}_{\sigma,\mu}')=\mathscr{E}_\LL$;
		\item $\LL$ is $\mathcal{S}_{\sigma,\mu}$-globally hypoelliptic when $u \in \mathcal{S}_{\sigma,\mu}'$  and $L_r u \in \mathcal{S}_{\sigma,\mu},$ for all $1\leq r\leq\ell$ imply  $u \in \mathcal{S}_{\sigma,\mu}.$
	\end{enumerate}
\end{definition}

We are now prepared to present the main results of the paper. Regarding the regularity of solutions for time-independent systems, let us denote the set of zeros of the symbol as:
\begin{equation*}
	\mathcal{N} = \{(\tau,j)\in\mathbb{Z}^m\times\mathbb{N}: \sigma_{\mathbb{L}}(\tau,j)=0 \},
\end{equation*}
and introduce the following result:

\begin{theorem} \label{GH-thm}
	The system $\LL$ defined by \eqref{general-const-system} is $\mathcal{S}_{\sigma,\mu}$-globally hypoelliptic if and only if 
	$\mathcal{N}$ is finite and for all $\varepsilon>0$, there exists $C_\varepsilon>0$ such that  
	$$\|\sigma_{\mathbb{L}}(\tau,j)\| \geq C_\varepsilon \exp\left[-\varepsilon(\|\tau\|^{1/\sigma}+j^{1/(2n\mu)})\right],$$
	for any $(\tau,j)\in\Z^m\times\N$ such that $\sigma_{\mathbb{L}}(\tau,j)\neq0.$
\end{theorem}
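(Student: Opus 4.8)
The plan is to pass to the Fourier--Hermite coefficients and then run the usual dichotomy. By the eigenfunction-expansion characterization established above, writing $u=\sum_{j\in\N}u_j(t)\varphi_j(x)$ and $u_j(t)=\sum_{\tau\in\Z^m}\widehat{u_j}(\tau)e^{i\tau\cdot t}$, one has $u\in\mathcal{S}_{\sigma,\mu}$ if and only if there exist $C,\varepsilon>0$ with $|\widehat{u_j}(\tau)|\leq C\exp(-\varepsilon(\|\tau\|^{1/\sigma}+j^{1/(2n\mu)}))$ for all $(\tau,j)$, while $u\in\mathcal{S}'_{\sigma,\mu}$ if and only if for every $\varepsilon>0$ there is $C_\varepsilon>0$ with $|\widehat{u_j}(\tau)|\leq C_\varepsilon\exp(\varepsilon(\|\tau\|^{1/\sigma}+j^{1/(2n\mu)}))$; here the Weyl asymptotics \eqref{weyl} are used to trade $|\lambda_j|$ for a power of $j$. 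Since $P\varphi_j=\lambda_j\varphi_j$, the equation $L_ru=f_r$ is equivalent to the scalar relations $\sigma_{L_r}(\tau,j)\widehat{u_j}(\tau)=\widehat{f_{r,j}}(\tau)$ for all $(\tau,j)$, and for $(\tau,j)\notin\mathcal N$ we have $\|\sigma_{\mathbb L}(\tau,j)\|=\max_{1\leq r\leq\ell}|\sigma_{L_r}(\tau,j)|>0$.

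For sufficiency, assume the two conditions and let $u\in\mathcal{S}'_{\sigma,\mu}$ satisfy $f_r:=L_ru\in\mathcal{S}_{\sigma,\mu}$ for every $r$. Split the coefficient array of $u$ into its restriction to $\mathcal N$ and to its complement. As $\mathcal N$ is finite, the first piece is a finite linear combination of the functions $e^{i\tau\cdot t}\varphi_j(x)$, each of which belongs to $\mathcal{S}_{\sigma,\mu}$ (because $\varphi_j\in\mathcal{S}^{1/2}_{1/2}\subseteq\mathcal{S}^\mu_\mu$ and $e^{i\tau\cdot t}$ is analytic on $\TT^m$, hence Gevrey of order $\sigma\geq1$), so it only remains to estimate the part supported off $\mathcal N$. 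For such $(\tau,j)$ choose $r=r(\tau,j)$ realizing the maximum in $\|\sigma_{\mathbb L}(\tau,j)\|$, so that $|\widehat{u_j}(\tau)|=|\widehat{f_{r,j}}(\tau)|/\|\sigma_{\mathbb L}(\tau,j)\|$. Since there are finitely many indices $r$, there exist $C>0$ and $\varepsilon_*>0$ with $|\widehat{f_{r,j}}(\tau)|\leq C\exp(-\varepsilon_*(\|\tau\|^{1/\sigma}+j^{1/(2n\mu)}))$ for all $r$ and all $(\tau,j)$; combining this with the lower bound for $\|\sigma_{\mathbb L}\|$ applied with $\varepsilon=\varepsilon_*/2$ gives $|\widehat{u_j}(\tau)|\leq (C/C_{\varepsilon_*/2})\exp(-\tfrac{\varepsilon_*}{2}(\|\tau\|^{1/\sigma}+j^{1/(2n\mu)}))$ on $\mathcal N^{c}$. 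Hence $u\in\mathcal{S}_{\sigma,\mu}$.

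For necessity, suppose $\LL$ is $\mathcal{S}_{\sigma,\mu}$-globally hypoelliptic and argue by contraposition, producing counterexamples with all coefficients equal to $0$ or $1$. If $\mathcal N$ is infinite it is unbounded (every set $\{(\tau,j):\|\tau\|+j\leq R\}$ is finite), so pick distinct $(\tau^{(k)},j^{(k)})\in\mathcal N$ with $\|\tau^{(k)}\|+j^{(k)}\to\infty$ and let $u$ have coefficient $1$ at each of these indices and $0$ otherwise; then $u\in\mathcal{S}'_{\sigma,\mu}$ (bounded coefficients), $u\notin\mathcal{S}_{\sigma,\mu}$ (the coefficients do not decay along the sequence), and $L_ru=0\in\mathcal{S}_{\sigma,\mu}$ for every $r$ because $\sigma_{L_r}$ vanishes on $\mathcal N$, which contradicts global hypoellipticity. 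If instead the lower bound fails, there is $\varepsilon_0>0$ such that for each $k\in\N$ one finds $(\tau^{(k)},j^{(k)})\notin\mathcal N$ with $0<\|\sigma_{\mathbb L}(\tau^{(k)},j^{(k)})\|<\tfrac1k\exp(-\varepsilon_0(\|\tau^{(k)}\|^{1/\sigma}+(j^{(k)})^{1/(2n\mu)}))$; these points escape to infinity (otherwise, by pigeonhole, a fixed point of $\mathcal N^{c}$ would be repeated with $\|\sigma_{\mathbb L}\|$ bounded below while the right-hand side tends to $0$), so after passing to a subsequence they are distinct with $\|\tau^{(k)}\|+j^{(k)}\to\infty$. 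Taking again $u$ with coefficient $1$ at each $(\tau^{(k)},j^{(k)})$ and $0$ elsewhere, we obtain $u\in\mathcal{S}'_{\sigma,\mu}\setminus\mathcal{S}_{\sigma,\mu}$, while for every $r$ the coefficients of $L_ru$ satisfy $|\sigma_{L_r}(\tau^{(k)},j^{(k)})|\leq\|\sigma_{\mathbb L}(\tau^{(k)},j^{(k)})\|<\exp(-\varepsilon_0(\|\tau^{(k)}\|^{1/\sigma}+(j^{(k)})^{1/(2n\mu)}))$, so $L_ru\in\mathcal{S}_{\sigma,\mu}$, again a contradiction.

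The bulk of the routine work sits in the coefficient characterizations of $\mathcal{S}_{\sigma,\mu}$ and $\mathcal{S}'_{\sigma,\mu}$ and in the bookkeeping with \eqref{weyl}, which are already available. The only point that requires care is the necessity direction: one must be sure that the index sets carrying the nonzero coefficients are genuinely unbounded (so that the counterexamples are not regularizing), and that assigning bounded coefficients there keeps $u$ temperate while pushing each $L_ru$ into $\mathcal{S}_{\sigma,\mu}$ with an exponential gain that is uniform over the finitely many $r$.
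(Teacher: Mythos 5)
Your proposal is correct and follows essentially the same route as the paper: reduce to the full Fourier coefficient characterizations of $\mathcal{S}_{\sigma,\mu}$ and $\mathcal{S}'_{\sigma,\mu}$, prove sufficiency by dividing by the maximal symbol $\sigma_{L_{r^*}}(\tau,j)$ off the finite set $\mathcal{N}$, and prove necessity by building singular solutions supported on an infinite subset of $\mathcal{N}$ or on a sequence where $\|\sigma_{\mathbb{L}}\|$ decays faster than every $\exp[-\varepsilon(\|\tau\|^{1/\sigma}+j^{1/(2n\mu)})]$. Your version is in fact slightly more careful than the paper's on two minor points (uniformity of the decay rate over the finitely many indices $r$, and the verification that the chosen points are distinct and escape to infinity, which you get from the $1/k$ factor whereas the paper instead inserts the growing weight $|(\tau_k,j_k)|$ into the coefficients), but these are cosmetic variations of the same argument.
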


Concerning global solvability, we have the following result. 

\begin{theorem}\label{GS-thm}
	The system $\LL$ is $\mathcal{S}_{\sigma,\mu}$-globally solvable on $\TT^m\times \R^n$ if and only if for all $\varepsilon>0$, there exists $C_\varepsilon>0$ such that 
	\begin{equation*}
		\|\sigma_{\mathbb{L}}(\tau,j)\| \geq C_\varepsilon \exp\left[-\varepsilon(\|\tau\|^{1/\sigma}+j^{1/(2n\mu)})\right]
	\end{equation*}
	for all $(\tau,j)\in\Z^m\times\N$ such that $\|\sigma_{\mathbb{L}}(\tau,j)\|\neq 0,$
\end{theorem}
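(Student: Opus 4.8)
The plan is to reduce the problem to the study of the Fourier coefficients, using the eigenfunction expansion $u(t,x)=\sum_{j\in\N}u_j(t)\varphi_j(x)$ and the characterization of $\mathcal{S}_{\sigma,\mu}$ (respectively $\mathcal{S}'_{\sigma,\mu}$) in terms of decay (respectively growth) of the coefficients $\widehat{u_j}(\tau)$. Since the operators $L_r$ act diagonally on this expansion, $\widehat{(L_ru)_j}(\tau)=\sigma_{L_r}(\tau,j)\widehat{u_j}(\tau)$, solving $\LL u=F$ amounts, for each fixed $(\tau,j)$, to solving the scalar system $\sigma_{L_r}(\tau,j)\widehat{u_j}(\tau)=\widehat{f_{r,j}}(\tau)$ for $r=1,\dots,\ell$. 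The compatibility conditions \eqref{comp-cond-system}, \eqref{comp-cond-op} defining $\mathscr{E}_\LL$ guarantee that for each $(\tau,j)$ this scalar system is consistent: on the zero set $\mathcal{N}$ all data vanish and we may set $\widehat{u_j}(\tau)=0$, while off $\mathcal{N}$ we pick some $r$ with $\sigma_{L_r}(\tau,j)\neq 0$ and set $\widehat{u_j}(\tau)=\widehat{f_{r,j}}(\tau)/\sigma_{L_r}(\tau,j)$, the value being independent of the choice of $r$ by \eqref{comp-cond-system}.

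For the sufficiency direction, assume the lower bound holds. Given $F\in\mathscr{E}_\LL$, I would define $\widehat{u_j}(\tau)$ as above and estimate it by
$$|\widehat{u_j}(\tau)|\leq \|\sigma_\LL(\tau,j)\|^{-1}\max_r|\widehat{f_{r,j}}(\tau)|\leq C_\varepsilon^{-1}\exp\big[\varepsilon(\|\tau\|^{1/\sigma}+j^{1/(2n\mu)})\big]\max_r|\widehat{f_{r,j}}(\tau)|.$$
Since each $f_r\in\mathcal{S}_{\sigma,\mu}$, its coefficients decay like $\exp\big[-\delta(\|\tau\|^{1/\sigma}+j^{1/(2n\mu)})\big]$ for every $\delta>0$ (or for some $\delta>0$ — I would use the quantified version of the coefficient characterization). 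Absorbing the factor $\exp[\varepsilon(\cdots)]$ by choosing $\varepsilon$ small relative to the decay rate of $F$ shows that $u$ has coefficients with the required decay, hence $u\in\mathcal{S}_{\sigma,\mu}'$ — actually we need only $u\in\mathcal{S}'_{\sigma,\mu}$ for solvability, so even a polynomial-type bound after absorption suffices; the point is that the exponential loss $\exp[\varepsilon(\cdots)]$ is harmless against the genuine exponential decay of $F$. This gives $\LL u=F$, so $\mathscr{E}_\LL\subseteq\LL(\mathcal{S}'_{\sigma,\mu})$; the reverse inclusion is automatic from the definition of $\mathscr{E}_\LL$.

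For the necessity direction, I would argue by contraposition: suppose the lower bound fails, i.e. there exist $\varepsilon_0>0$ and a sequence $(\tau^{(k)},j^{(k)})$ with $\|\sigma_\LL(\tau^{(k)},j^{(k)})\|\neq 0$ but $\|\sigma_\LL(\tau^{(k)},j^{(k)})\|<\frac1k\exp\big[-\varepsilon_0(\|\tau^{(k)}\|^{1/\sigma}+(j^{(k)})^{1/(2n\mu)})\big]$. Passing to a subsequence so that the $(\tau^{(k)},j^{(k)})$ are distinct, I would construct $F\in\mathscr{E}_\LL$ by prescribing, for the index $r_k$ realizing the max in $\|\sigma_\LL(\tau^{(k)},j^{(k)})\|$, a coefficient $\widehat{f_{r_k,j^{(k)}}}(\tau^{(k)})$ of size roughly $\exp\big[-\frac{\varepsilon_0}{2}(\|\tau^{(k)}\|^{1/\sigma}+(j^{(k)})^{1/(2n\mu)})\big]$, and setting the other components' coefficients at $(\tau^{(k)},j^{(k)})$ equal to $\sigma_{L_s}(\tau^{(k)},j^{(k)})\widehat{f_{r_k,j^{(k)}}}(\tau^{(k)})/\sigma_{L_{r_k}}(\tau^{(k)},j^{(k)})$ to enforce \eqref{comp-cond-system}, and all remaining coefficients zero. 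One checks $F\in\mathcal{S}_{\sigma,\mu}\subseteq\mathscr{E}_\LL$ (the chosen decay beats any polynomial and is itself Gelfand-Shilov-admissible because the off-diagonal components inherit at worst polynomial amplification in $\sigma_{L_s}$). But any solution $u$ must have $|\widehat{u_{j^{(k)}}}(\tau^{(k)})|=|\widehat{f_{r_k,j^{(k)}}}(\tau^{(k)})|/\|\sigma_\LL(\tau^{(k)},j^{(k)})\|>k\exp\big[\frac{\varepsilon_0}{2}(\|\tau^{(k)}\|^{1/\sigma}+(j^{(k)})^{1/(2n\mu)})\big]$, which violates the growth bound permitted for elements of $\mathcal{S}'_{\sigma,\mu}$. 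Hence the system is not globally solvable. The main obstacle is the bookkeeping in the necessity part: one must ensure the constructed $F$ genuinely lies in $\mathscr{E}_\LL$ and has $\mathcal{S}_{\sigma,\mu}$-admissible coefficients despite the off-diagonal entries being forced by compatibility — this requires polynomial control $|\sigma_{L_s}(\tau,j)|\lesssim (\|\tau\|+|\lambda_j|)^{k_s}\lesssim (\|\tau\|+j^{M/2n})^{k_s}$, which is absorbed by shrinking the exponent slightly, and this is exactly where the comparison with Theorem \ref{GH-thm} clarifies why the solvability and hypoellipticity lower bounds coincide (the only difference being the finiteness of $\mathcal{N}$, irrelevant for solvability since data vanish on $\mathcal{N}$).
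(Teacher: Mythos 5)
Your proposal follows essentially the same route as the paper's proof (Propositions \ref{GS-implies-DC} and \ref{DC-implies-GS}): pass to full Fourier coefficients, define $\widehat{u_j}(\tau)$ by dividing by the symbol component realizing $\|\sigma_{\LL}(\tau,j)\|$, use the compatibility conditions to check that all the equations are then satisfied, and for necessity build data concentrated on a sequence violating the inequality. However, one step in your sufficiency argument needs correcting: the admissible set $\mathscr{E}_\LL$ only requires $f_r\in\mathcal{S}'_{\sigma,\mu}$, so for a general $F\in\mathscr{E}_\LL$ you may \emph{not} assume exponential decay of $\widehat{f_{r,j}}(\tau)$. As written, your absorption argument ("the exponential loss is harmless against the genuine exponential decay of $F$") only establishes solvability for data with components in $\mathcal{S}_{\sigma,\mu}$, which is strictly weaker than the required inclusion $\mathscr{E}_\LL\subseteq\LL(\mathcal{S}'_{\sigma,\mu})$. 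The fix is immediate and is exactly what the paper does: use the ultradistributional characterization, namely that for every $\varepsilon>0$ there is $C_\varepsilon$ with $|\widehat{f_{r^*,j}}(\tau)|\le C_\varepsilon \exp\left[\tfrac{\varepsilon}{2}(\|\tau\|^{1/\sigma}+j^{1/(2n\mu)})\right]$, combine it with the hypothesis at level $\varepsilon/2$, and conclude $|\widehat{u_j}(\tau)|\le C\exp\left[\varepsilon(\|\tau\|^{1/\sigma}+j^{1/(2n\mu)})\right]$ for every $\varepsilon$, i.e. $u\in\mathcal{S}'_{\sigma,\mu}$.

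On the necessity side your construction is sound, and in fact more scrupulous than the paper's (which simply sets all $\widehat{f_{r,j_k}}(\tau_k)=1$): prescribing the coefficient for the maximizing index $r_k$ and forcing the others via $\sigma_{L_s}/\sigma_{L_{r_k}}$ enforces \eqref{comp-cond-system}, and \eqref{comp-cond-op} holds automatically because each forced coefficient carries the factor $\sigma_{L_s}(\tau^{(k)},j^{(k)})$. Your worry about polynomial amplification is unnecessary: by maximality $|\sigma_{L_s}(\tau^{(k)},j^{(k)})/\sigma_{L_{r_k}}(\tau^{(k)},j^{(k)})|\le 1$, so all components inherit the decay $\exp\left[-\tfrac{\varepsilon_0}{2}(\|\tau^{(k)}\|^{1/\sigma}+(j^{(k)})^{1/(2n\mu)})\right]$ and lie in $\mathcal{S}_{\sigma,\mu}$. (One small caveat of phrasing: $F\in\mathscr{E}_\LL$ follows from the compatibility conditions you built in, not from the inclusion "$\mathcal{S}_{\sigma,\mu}\subseteq\mathscr{E}_\LL$", which is false as stated.) The concluding contradiction with the growth characterization of $\mathcal{S}'_{\sigma,\mu}$ matches the paper's argument.
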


Observe the condition for achieving $\mathcal{S}_{\sigma,\mu}$-global solvability in the previous theorem coincides with one of the requirement for obtaining $\mathcal{S}_{\sigma,\mu}$-global hypoellipticity.  The distinction lies in the fact that the global solvability problem can be addressed also in cases where the set $\mathcal{N}$ contains infinitely many elements. Consequently, we arrive at the following corollary.

\begin{corollary}
	If $\LL$ is $\mathcal{S}_{\sigma,\mu}$-globally hypoelliptic, then it is also $\mathcal{S}_{\sigma,\mu}$-globally solvable.	
\end{corollary}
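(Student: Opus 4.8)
The plan is to combine Theorem~\ref{GH-thm} and Theorem~\ref{GS-thm} directly. Suppose $\LL$ is $\mathcal{S}_{\sigma,\mu}$-globally hypoelliptic. By Theorem~\ref{GH-thm}, this gives us two pieces of information: first, that $\mathcal{N}$ is finite, and second, that for every $\varepsilon>0$ there is $C_\varepsilon>0$ with
$$\|\sigma_{\mathbb{L}}(\tau,j)\| \geq C_\varepsilon \exp\left[-\varepsilon(\|\tau\|^{1/\sigma}+j^{1/(2n\mu)})\right]$$
for all $(\tau,j)\in\Z^m\times\N$ with $\sigma_{\mathbb{L}}(\tau,j)\neq 0$. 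The second assertion is \emph{verbatim} the hypothesis appearing in Theorem~\ref{GS-thm} (note that the condition $\sigma_{\mathbb{L}}(\tau,j)\neq 0$ is the same as $\|\sigma_{\mathbb{L}}(\tau,j)\|\neq 0$, since $\|\cdot\|$ is a norm on $\C^\ell$). Hence the sufficient condition of Theorem~\ref{GS-thm} is satisfied, and we conclude that $\LL$ is $\mathcal{S}_{\sigma,\mu}$-globally solvable.

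The only point worth a word of explanation — and the reason the implication is not an empty tautology — is that global hypoellipticity \emph{additionally} forces $\mathcal{N}$ to be finite, a constraint that global solvability does not require; discarding this extra information is harmless for the present direction. There is no real obstacle here: the work has already been done in the two theorems, and the corollary is simply the observation, highlighted in the paragraph preceding it, that the exponential lower bound on $\|\sigma_{\mathbb{L}}\|$ appears as a (sufficient) hypothesis in the solvability criterion and as part of the (necessary and sufficient) characterization of hypoellipticity. I would therefore keep the proof to two or three sentences, citing Theorems~\ref{GH-thm} and~\ref{GS-thm}.
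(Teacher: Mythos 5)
Your proof is correct and is essentially the paper's own argument: the necessity half of Theorem~\ref{GH-thm} yields exactly the exponential lower bound on $\|\sigma_{\mathbb{L}}(\tau,j)\|$ that serves as the sufficient condition in Theorem~\ref{GS-thm}, with the finiteness of $\mathcal{N}$ simply discarded. Nothing further is needed.
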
 

The paper is structured as follows: In Section 2, we define time-periodic Gelfand-Shilov spaces, given in \cite{AviCap22}, clarifying their characterization through eigenfunction expansions. Next, we give a comprehensive characterization of functions and ultradistributions based on their Fourier coefficients with respect to all variables on $\TT^m\times \R^n$ simultaneously. In Section 3, we prove Theorem \ref{GH-thm}, which provides a complete characterization of global hypoellipticity for systems of the form \eqref{general-const-system}. Following this, Section 4 presents the proof of Theorem \ref{GS-thm}, focusing on global solvability for ultradistributions. We demonstrate that this class of systems admits strong solutions, and we highlight the relationship between global hypoellipticity and global solvability. In Section 5, we introduce examples involving Liouville vectors, a phenomenon commonly encountered in fully periodic systems. Finally, Section 6 introduces a class of systems where the global hypoellipticity and solvability are intertwined with a related time-independent system.

\section{Notations and preliminary results  \label{sec-notations}} 

Let us begin by recalling the definition and some fundamental properties of the spaces $\mathcal{S}_{\sigma, \mu}(\TT^m \times \R^n)$ and $\mathcal{S}'_{\sigma, \mu}(\TT^m \times \R^n)$.

\subsection{Time-periodic Gelfand-Shilov spaces and eigenfunction expansions}

Throughout this paper, we denote by $\mathcal{G}^{\sigma,h}(\TT^m)$, where $h > 0$ and $\sigma \geq 1$, the space of all smooth functions $\varphi \in C^{\infty}(\TT^m)$ such that there exists a constant $C > 0$ satisfying 
\begin{equation*}
	\sup_{t \in \TT^m} |\partial^{\gamma}\varphi(t)| \leq C h^{\gamma}(\gamma!)^{\sigma}, \quad \forall \gamma \in \Z_+^m.
\end{equation*}

Thus, $\mathcal{G}^{\sigma,h}(\TT^m)$ is a Banach space equipped with the norm
\begin{equation*}
	\|\varphi\|_{\sigma,h} = \sup_{\gamma \in \Z_+^n}\left \{\sup_{t \in \TT^m} |\partial^{\gamma}\varphi(t)|  h^{-\gamma}(\gamma!)^{-\sigma}\right\},
\end{equation*}
and the space of periodic Gevrey functions of order $\sigma$ is defined by
$$
\mathcal{G}^{\sigma}(\TT^m) = \underset{h\rightarrow +\infty}{\operatorname{ind} \lim} \ \mathcal{G}^{\sigma, h} (\TT^m).
$$
Its dual space is denoted by $(\mathcal{G}^{\sigma})'(\TT^m)$.

Recall that the Fourier coefficients of a Gevrey function (ultradistribution) $f$ on $\TT^m$ are defined as
\begin{equation*}
	\widehat{f}(\tau) = \frac{1}{(2\pi)^m} \int_{\TT^m} f(t) e^{-i \tau \cdot t} dt, \quad \tau \in \Z^m,
\end{equation*}
with the dual correspondence 
\begin{equation*}
	\widehat{f}(\tau) = \langle f, e^{i \tau \cdot t} \rangle, \quad \tau \in \Z^m,
\end{equation*}
for $f \in (\mathcal{G}^{\sigma})'(\TT^m)$. 

In particular, functions and distributions can be represented in terms of Fourier series as
\begin{equation*}
	f(t) = \sum_{\tau \in \Z^m} \widehat{f}(\tau) e^{i \tau \cdot t}.
\end{equation*}

\begin{theorem}\label{Theorem_char_G}
	Let $\{a_\tau\}_{\tau \in \Z^{m}}$ be a sequence of complex numbers, and consider the formal series
	\begin{equation}\label{formal_series}
		f(t) = \sum_{\tau\in \Z^m} a_\tau e^{i \tau \cdot t}, 
	\end{equation}
	where $t\in\TT^m$. Then the series \eqref{formal_series}:
	\begin{itemize}
		\item[(a)] converges in $\mathcal{G}^{\sigma}(\TT^m)$ if and only if there exist $\varepsilon>0$ and $C>0$ such that
		\begin{equation}\label{seq-partial-coeff-functions}
			|a_{\tau}| \leq C e^{-\varepsilon\|\tau\|^{\frac{1}{\sigma}}}, \quad \text{for all } \tau \in \Z^m. 
		\end{equation}
		
		\item[(b)] converges in $(\mathcal{G}^{\sigma})'(\TT^m)$ if and only  for all $\varepsilon>0$, there exists $C_\varepsilon>0$ such that
		\begin{equation*}
			|a_{\tau}| \leq C_\varepsilon e^{\varepsilon\|\tau\|^{\frac{1}{\sigma}}}, \quad \text{for all } \tau \in \Z^m.
		\end{equation*}
	\end{itemize}
	
Moreover, if any of the convergence conditions above hold, then for each $\tau \in \Z^m$, we have $\widehat{f} (\tau) = a_\tau$, for all $\tau \in \Z^m.$
\end{theorem}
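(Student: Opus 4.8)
The plan is to prove part (a) first and then bootstrap part (b) from it; in all four implications the statement $\widehat f(\tau)=a_\tau$ comes for free, because the asserted convergence is always strong enough for the coefficient functionals $g\mapsto\widehat g(\tau)$ to be continuous, so it suffices to apply them to the partial sums. For the sufficiency in (a), assume $|a_\tau|\le Ce^{-\varepsilon\|\tau\|^{1/\sigma}}$. I would differentiate the series termwise, bound $|\partial_t^\gamma(a_\tau e^{i\tau\cdot t})|\le|a_\tau|\,\|\tau\|^{|\gamma|}$, split $e^{-\varepsilon\|\tau\|^{1/\sigma}}=e^{-\frac\varepsilon2\|\tau\|^{1/\sigma}}e^{-\frac\varepsilon2\|\tau\|^{1/\sigma}}$, and invoke the elementary bound $\sup_{s\ge0}s^{k}e^{-\frac\varepsilon2 s^{1/\sigma}}\le A^{k}(k!)^{\sigma}$ (substitute $u=s^{1/\sigma}$, use $\sup_u u^{a}e^{-cu}=(a/ce)^{a}$ and $k^{k}\le e^{k}k!$). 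Then $\sum_\tau|a_\tau|\,\|\tau\|^{|\gamma|}\le A^{|\gamma|}(|\gamma|!)^{\sigma}\sum_\tau e^{-\frac\varepsilon2\|\tau\|^{1/\sigma}}$, and since the remaining series converges and $|\gamma|!\le m^{|\gamma|}\gamma!$, the series defines an element of some $\mathcal G^{\sigma,h}(\TT^m)$; the same estimate applied to the tails shows the partial sums are Cauchy there, hence converge in $\mathcal G^{\sigma}(\TT^m)$, and taking Fourier coefficients of the (in particular uniformly convergent) partial sums gives $\widehat f(\tau)=a_\tau$.

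For the necessity in (a), suppose the series converges in $\mathcal G^{\sigma,h}(\TT^m)$ to $f$, so $\widehat f(\tau)=a_\tau$ as above. From $|\widehat{\partial_t^\gamma f}(\tau)|\le\|\partial_t^\gamma f\|_{L^\infty(\TT^m)}\le\|f\|_{\sigma,h}\,h^{|\gamma|}(\gamma!)^{\sigma}$ and $\widehat{\partial_t^\gamma f}(\tau)=(i\tau)^\gamma a_\tau$ one gets $|\tau^\gamma|\,|a_\tau|\le\|f\|_{\sigma,h}\,h^{|\gamma|}(\gamma!)^{\sigma}$ for every multi-index $\gamma$. Choosing $\gamma=Ne_{i_0}$ with $i_0$ a coordinate realizing $\|\tau\|_\infty=\max_i|\tau_i|$ and optimizing in $N$ (Stirling gives $h^{N}(N!)^{\sigma}\|\tau\|_\infty^{-N}\le(AhN^{\sigma}/\|\tau\|_\infty)^{N}$, and with $N\sim c\|\tau\|_\infty^{1/\sigma}$, $c$ small, the base is $\le\tfrac12$) yields $|a_\tau|\le C e^{-\varepsilon\|\tau\|_\infty^{1/\sigma}}$; equivalence of norms on $\R^m$ converts this into the stated bound in $\|\tau\|$. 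The finitely many $\tau$ with $\|\tau\|_\infty$ too small to run the optimization are absorbed by enlarging $C$.

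For part (b), sufficiency: given $|a_\tau|\le C_\varepsilon e^{\varepsilon\|\tau\|^{1/\sigma}}$ for all $\varepsilon>0$, I would let $S_N=\sum_{\|\tau\|\le N}a_\tau e^{i\tau\cdot t}$ and observe that for every $\varphi\in\mathcal G^{\sigma}(\TT^m)$, hence $\varphi\in\mathcal G^{\sigma,h}(\TT^m)$ for some $h$, part (a) applied to $\varphi$ gives $|\widehat\varphi(\tau)|\le C_{\varphi}e^{-\varepsilon_0\|\tau\|^{1/\sigma}}$ for some $\varepsilon_0>0$; pairing with $|a_\tau|\le C_{\varepsilon_0/2}e^{\frac{\varepsilon_0}2\|\tau\|^{1/\sigma}}$ shows $\sum_\tau a_\tau\langle e^{i\tau\cdot t},\varphi\rangle$ converges absolutely, so $\langle S_N,\varphi\rangle$ converges for each $\varphi$. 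By Banach--Steinhaus (the LF-space $\mathcal G^{\sigma}(\TT^m)$ is barrelled) the limit defines $f\in(\mathcal G^{\sigma})'(\TT^m)$, the partial sums converge to $f$ there, and $\widehat f(\tau)=a_\tau$. For the necessity, if the series converges in $(\mathcal G^{\sigma})'(\TT^m)$ to $f$, then $\widehat f(\tau)=a_\tau$ by weak-$*$ continuity of $g\mapsto\langle g,e^{i\tau\cdot t}\rangle$; restricting the continuous functional $f$ to each Banach space $\mathcal G^{\sigma,h}(\TT^m)$ yields $|a_\tau|=|\langle f,e^{i\tau\cdot t}\rangle|\le C_h\,\|e^{i\tau\cdot t}\|_{\sigma,h}$, and the one-variable optimization $\sup_{k\ge0}(|\tau_i|/h)^{k}(k!)^{-\sigma}\le A\exp\!\big(\sigma(|\tau_i|/h)^{1/\sigma}\big)$ gives $\|e^{i\tau\cdot t}\|_{\sigma,h}\le A^{m}\exp\!\big(\sigma h^{-1/\sigma}\textstyle\sum_i|\tau_i|^{1/\sigma}\big)\le C'\exp(C''h^{-1/\sigma}\|\tau\|^{1/\sigma})$; given $\varepsilon>0$, choosing $h$ so large that $C''h^{-1/\sigma}\le\varepsilon$ produces $|a_\tau|\le C_\varepsilon e^{\varepsilon\|\tau\|^{1/\sigma}}$.

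The obstacle here is bookkeeping rather than conceptual: all four implications collapse to one of two elementary extremal estimates --- "$\sup_s s^k e^{-cs^{1/\sigma}}$ is of size $(k!)^\sigma$ up to a geometric factor" and its dual "$\sup_k a^k/(k!)^\sigma$ is of size $e^{\sigma a^{1/\sigma}}$" --- and the care lies in passing between multi-indices and single indices, between the equivalent norms on $\Z^m$, and in making sure that the \emph{quantitative} (norm-controlled) form of (a) is extracted for use in the sufficiency of (b). The one genuinely structural point is that convergence in $\mathcal G^{\sigma}(\TT^m)$, resp. in $(\mathcal G^{\sigma})'(\TT^m)$, automatically forces $\widehat f(\tau)=a_\tau$, which is what links the two convergence statements to the displayed coefficient bounds.
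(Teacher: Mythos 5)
Your proof is correct. Note that the paper never proves Theorem \ref{Theorem_char_G}: it is stated as a standard preliminary, so there is nothing to compare against directly; your argument is the standard one, and it rests on exactly the two extremal estimates the authors themselves invoke later, namely $\|\tau\|^{|\gamma|}e^{-\frac{\varepsilon}{2}\|\tau\|^{1/\sigma}}\le C_\varepsilon^{|\gamma|}(\gamma!)^{\sigma}$ in the proof of Theorem \ref{charac_full_fourier-functions} and the bound on $\|e^{-it\cdot\tau}\|_{\sigma,h}$ in the proof of Theorem \ref{charac_full_fourier_ultradistr}. Two minor points of wording, neither of which is a gap: in the necessity part of (a) you should not phrase the hypothesis as convergence in a fixed step $\mathcal{G}^{\sigma,h}(\TT^m)$ (that would implicitly use regularity of the inductive limit); your argument only needs that the limit $f$ belongs to some $\mathcal{G}^{\sigma,h}(\TT^m)$ and that $\widehat f(\tau)=a_\tau$, both of which follow from convergence in $\mathcal{G}^{\sigma}(\TT^m)$ and continuity of the coefficient functionals, as you yourself observe at the outset. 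Likewise, $\mathcal{G}^{\sigma}(\TT^m)$ is an LB-space rather than an LF-space, but it is barrelled either way, so the Banach--Steinhaus step in the sufficiency of (b) is sound. Your insistence on the quantitative, norm-controlled form of (a)-necessity (decay rate $\varepsilon_0$ depending only on $h$) is precisely what makes the pairing argument in (b) work, and in (b)-necessity you correctly let $h$ grow with $1/\varepsilon$ before absorbing the $h$-dependent continuity constant into $C_\varepsilon$ --- a point handled somewhat loosely in the paper's own analogous computation at the end of the proof of Theorem \ref{charac_full_fourier_ultradistr}.
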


Fixing $\sigma \geq 1$, $\mu\geq 1/2$, and $C>0$, and denoting by $\mathcal{S}_{\sigma, \mu, C}$ the space of all smooth functions on $\TT^m \times \R^n$ for which the norm \eqref{firstnorm} is finite, it is straightforward to prove that $\mathcal{S}_{\sigma, \mu, C}$ is a Banach space. We can then equip $\mathcal{S}_{\sigma, \mu} = \bigcup\limits_{C> 0} \mathcal{S}_{\sigma,\mu,C}$ with the inductive limit topology given by
$$
\mathcal{S}_{\sigma,\mu}  =\displaystyle \underset{C\rightarrow +\infty}{\mbox{ind} \lim} \;\mathcal{S}_{\sigma,\mu,C}.
$$
We denote by $\mathcal{S}_{\sigma, \mu}'$ the space of all linear continuous forms $u: \mathcal{S}_{\sigma, \mu} \to \C$.

\begin{remark}
	According to \cite[Lemma 3.1]{GraPilRod11}, an equivalent norm to \eqref{firstnorm} on $\mathcal{S}_{\sigma,\mu,C}$ is given by:
	\begin{equation}\label{secondnorm}
		\| u \|_{\sigma, \mu,C} := \sup_{\gamma \in \Z^m_+, M \in \N} C^{-M-|\gamma|} M!^{-m\mu} \gamma!^{-\sigma} \| P^M \partial_t^\gamma u \|_{L^2(\TT^m \times \R^n)}.
	\end{equation}
	In order to take advantage of the good properties of the operator $P(x,D_x)$ we shall often use the norm \eqref{secondnorm} instead of \eqref{firstnorm}.
\end{remark}

Now we want to recover the Fourier analysis presented in \cite{AviCap22}. With this purpose we recall the characterization of $\mathcal{S}_{\sigma, \mu}$ and $\mathcal{S}'_{\sigma, \mu}$ in terms of eigenfunction expansions. For this, let $\varphi_j \in \mathcal{S}_{1/2}^{1/2}(\R^n), j \in \N$, be the eigenfunctions of the operator $P$ in \eqref{P-intro}. We have the following results.

In the sequel we recall a characterization of $\mathcal{S}_{\sigma,\mu}(\mathbb{T}^m\times \R^n)$ and 
$\mathcal{S}_{\sigma,\mu}'(\mathbb{T}^m\times \R^n)$ in terms of the Fourier analysis generated by $\{\varphi_j \}$. First, we observe that if $u \in \mathcal{S}_{\sigma,\mu}' (\mathbb{T}^m\times \R^n)$, then for any $j \in \Z_+$ the linear form $u_j: \mathcal{G}^{\sigma} \to \C$ given by
\begin{equation}\label{partial}
	\langle  u_j(t) \, , \, \psi(t) \rangle \doteq 
	\langle  u ,  \psi(t)\varphi_j(x) \rangle
\end{equation}
belongs to $(\mathcal{G}^{\sigma})'(\TT^m)$. Moreover, for every $\varepsilon>0$ and $h >0$, there exists $C_{\varepsilon,h} >0$ such that
\begin{equation*} 
	|\langle  u_j(t) \, , \, \psi(t) \rangle | \leq C_{\varepsilon,h} \| \psi\|_{\sigma, h} \exp \left(\varepsilon j^{\frac1{2n\mu}}\right),
\end{equation*}
for all $j \in \N,$ and $\psi \in \mathcal{G}^{\sigma,h}(\TT^m)$.

Then, we can express
\begin{equation*}
	\langle  u \, , \, \theta \rangle  =
	\sum_{j \in \N} \langle u_j(t)\varphi_j(x)  \, , \, \theta \rangle
\end{equation*}
where
\begin{equation*}
	\langle  u_j(t)\varphi_j(x) \, , \, \theta \rangle \doteq 
	\left\langle u_j(t)  \, , \, \int_{\mathbb{R}^n} \theta(t,x) \varphi_j(x) dx \right\rangle 
\end{equation*}
and $\{u_j(t)\}_{j \in \N}$ is a sequence in $(\mathcal{G}^{\sigma})'(\TT^m)$ as defined in \eqref{partial}.

Conversely, given a sequence $\{u_j(t)\}_{j \in \N}$ in $(\mathcal{G}^{\sigma})'(\TT^m)$ such that for every $\varepsilon>0$ and $h>0$, there exists $C_{\varepsilon, h}>0$ satisfying
\begin{equation}\label{estimate-distr}
	|\langle  u_j \, , \, \psi \rangle| \leq C_{\varepsilon, h} \|\psi\|_{\sigma,h}
	\exp\left(\varepsilon j^{\frac{1}{2n\mu}}\right),
\end{equation}
for all $j \in \N$ and $\psi \in \mathcal{G}^{\sigma,h}(\TT^m)$.

Then $u = \sum_{j \in \N} u_j(t) \varphi_j(x)$ 
belongs to $\mathcal{S}_{\sigma,\mu}' (\mathbb{T}^m\times \R^n)$, and
$$
\langle  u_j \, , \, \psi(t) \rangle  = \langle  u \, , \, \psi(t)\varphi_j(x) \rangle.
$$
for every $\psi \in  \mathcal{G}^{\sigma}(\TT^m)$.

In particular, if $\{u_j\}_{j \in \N}$ is a sequence of functions in $\mathcal{G}^{\sigma}(\TT^m)$, then we can replace \eqref{estimate-distr} by
\begin{equation*}
	\sup_{t \in \TT^m} | u_j(t)|\leq C_{\varepsilon} 
	\exp\left(\varepsilon j^{\frac{1}{2n\mu}}\right),
\end{equation*}
for all $j \in \N.$

\begin{theorem}\label{Thm-estimate-derivatives-functions}
	Let $\mu\geq 1/2$ and $\sigma \geq 1$, and let $u \in \mathcal{S}_{\sigma,\mu}'$. Then $u \in \mathcal{S}_{\sigma,\mu}$ if and only if it can be represented as 
	\begin{equation*}
		u(t,x) = \sum_{j \in \N} u_j(t) \varphi_j(x),
	\end{equation*}
	where
	$$
	u_j(t) = \int_{\R^n} u(t,x)\varphi_j(x)dx,
	$$
	and there exist $C >0$ and $\varepsilon>0$ such that
	\begin{equation} \label{deccoeff}
		\sup_{t \in \TT^m} | \partial_t^\gamma u_j(t)| \leq
		C^{|\gamma|+1} (\gamma!)^{\sigma} \exp \left[-\varepsilon j^{\frac{1}{2n\mu}} \right], \quad \forall j \in \N, \forall \gamma \in \Z^m_+.
	\end{equation}		
	
\end{theorem}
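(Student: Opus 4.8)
The plan is to prove the two implications separately, using the characterization of $\mathcal{S}_{\sigma,\mu}$ via the $L^2$-based norm \eqref{secondnorm} together with the fact that the $\varphi_j$ diagonalize $P$ and that $\|P^M\varphi_j\|_{L^2(\R^n)} = |\lambda_j|^M$ with the Weyl asymptotics \eqref{weyl} giving $|\lambda_j|\sim \rho j^{M/2n}$.

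\emph{Necessity.} Suppose $u\in\mathcal{S}_{\sigma,\mu}$, so $\|u\|_{\sigma,\mu,C}<\infty$ for some $C>0$. Since $\{\varphi_j\}$ is an orthonormal basis of $L^2(\R^n)$ and $P$ is self-adjoint, for each fixed $t$ the function $x\mapsto u(t,x)$ decomposes as $\sum_j u_j(t)\varphi_j(x)$ with $u_j(t)=\int_{\R^n} u(t,x)\varphi_j(x)\,dx$; differentiating under the integral sign and integrating by parts against $P$ gives $\partial_t^\gamma u_j(t) = \lambda_j^{-M}\int_{\R^n} (P^M\partial_t^\gamma u)(t,x)\varphi_j(x)\,dx$ for any $M$ (for $\lambda_j\neq 0$; only finitely many $\lambda_j$ can vanish). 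By Cauchy--Schwarz, $|\partial_t^\gamma u_j(t)|\le |\lambda_j|^{-M}\|P^M\partial_t^\gamma u(t,\cdot)\|_{L^2(\R^n)}\le |\lambda_j|^{-M}\|P^M\partial_t^\gamma u\|_{L^2(\TT^m\times\R^n)}\le |\lambda_j|^{-M} \|u\|_{\sigma,\mu,C}\, C^{M+|\gamma|}M!^{m\mu}\gamma!^\sigma$. Now optimize over $M\in\N$: using Stirling, $\inf_M |\lambda_j|^{-M}C^M M!^{m\mu}$ behaves like $\exp(-c|\lambda_j|^{1/(m\mu)})$ for a suitable $c>0$ (choosing $M\approx c'|\lambda_j|^{1/(m\mu)}$), and since $|\lambda_j|\sim\rho j^{M/2n}$ — careful, the two $M$'s clash in notation; I will rename the Weyl exponent — using $|\lambda_j|\gtrsim j^{M_P/(2n)}$ where $M_P$ is the order of $P$, one gets $|\lambda_j|^{1/(m\mu)}\gtrsim j^{M_P/(2nm\mu)}$. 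To land exactly on the stated exponent $j^{1/(2n\mu)}$ I should instead optimize keeping the dependence cleanly: it is better to bound $|\lambda_j|^{-M}C^M M!^{m\mu}\le C'\exp(-\varepsilon j^{1/(2n\mu)})$ by the standard Gelfand--Shilov argument (this is exactly the one-variable-in-$x$ estimate, used coordinatewise or just via $\|P^M\varphi_j\|=|\lambda_j|^M$), absorbing the power $M_P$ into constants; this reproduces \eqref{deccoeff} with the $\gamma!^\sigma C^{|\gamma|+1}$ factor coming along unchanged.

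\emph{Sufficiency.} Conversely, assume \eqref{deccoeff}. Using the equivalent norm \eqref{secondnorm}, it suffices to show $\|P^M\partial_t^\gamma u\|_{L^2(\TT^m\times\R^n)}\le \tilde C^{M+|\gamma|+1} M!^{m\mu}\gamma!^\sigma$ for all $M,\gamma$ and some $\tilde C$. Since $P^M\partial_t^\gamma u = \sum_j \lambda_j^M (\partial_t^\gamma u_j)(t)\varphi_j(x)$ and the $\varphi_j$ are orthonormal in $x$, Parseval gives $\|P^M\partial_t^\gamma u\|_{L^2}^2 = \sum_j \|\lambda_j^M\partial_t^\gamma u_j\|_{L^2(\TT^m)}^2 \le (2\pi)^m\sum_j |\lambda_j|^{2M}\sup_t|\partial_t^\gamma u_j(t)|^2$. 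Plugging in \eqref{deccoeff} and the Weyl bound $|\lambda_j|\le C'' j^{M_P/(2n)}$, the series becomes $\le C_1^{|\gamma|+1}\gamma!^{2\sigma}\sum_j (C'' j^{M_P/2n})^{2M}\exp(-2\varepsilon j^{1/(2n\mu)})$. One estimates the sum by $\sup_j (C'' j^{M_P/2n})^{2M}\exp(-\varepsilon j^{1/(2n\mu)})$ times a convergent tail $\sum_j \exp(-\varepsilon j^{1/(2n\mu)})$; the supremum, again by a standard calculus lemma ($\sup_{s>0} s^{a M}e^{-\varepsilon s^b}\le (C_2 M/\varepsilon)^{aM/b}\le C_3^M M!^{a/b}$ with $a=M_P/2n$ applied to $s=j$, $b=1/(2n\mu)$, so $a/b = M_P\mu$), gives a bound $C_4^{2M} (M!)^{2M_P\mu}$. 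This is \emph{not} quite $M!^{2m\mu}$ unless $M_P = m$, which is false in general — so this crude route loses too much. The fix is to avoid introducing the power $M_P$ at all: one should keep $|\lambda_j|^M$ and directly use the sharp summation $\sum_j |\lambda_j|^{2M}\exp(-2\varepsilon j^{1/(2n\mu)})\le C^{2M}(M!)^{2\cdot 2n\mu\cdot (M_P/2n)}\cdots$ — still the same issue.

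\emph{Resolution of the obstacle.} The clean argument is to reduce to the one-variable statement already recorded in the excerpt: Theorem~\ref{Thm-estimate-derivatives-functions} is the $\TT^m\times\R^n$ analogue of the classical eigenfunction characterization of $\mathcal{S}^\mu_\mu(\R^n)$ (references \cite{GraPilRod11,CapGraPilRod19}) tensored with the Gevrey characterization on $\TT^m$ (Theorem~\ref{Theorem_char_G}), and the matching of exponents $j^{1/(2n\mu)}$ with the Gelfand--Shilov index $\mu$ is exactly the content of those cited results: the coefficient estimate $|c_j|\le C\exp(-\varepsilon j^{1/(2n\mu)})$ is equivalent to $\sum_j c_j\varphi_j\in\mathcal{S}^\mu_\mu(\R^n)$, and this equivalence already correctly accounts for the Weyl exponent $M_P/2n$ through the optimization $\inf_M |\lambda_j|^{-M}C^M M!^{m\mu}$ — in the $x$-variable the natural exponent of $M!$ is $\mu$ per variable, i.e. the relevant sharp statement is $\sum_j|\lambda_j|^{2M}e^{-2\varepsilon j^{1/(2n\mu)}}\lesssim C^{2M}M!^{2n\mu\cdot(M_P/n)/\!\cdots}$; writing it as the cited lemma does, $\|P^Mf\|_{L^2}\le C^{M+1}(M!)^{m\mu}\|f\|$ is precisely the $\mathcal{S}^\mu_\mu$ membership criterion via $P$, so no spurious $M_P$ survives. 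Thus the proof structure is: (i) for necessity, combine the $x$-direction estimate (Cauchy--Schwarz against $P^M$, optimize in $M$ using Weyl, cite \cite{GraPilRod11}) with the $t$-direction Gevrey estimate, which is immediate from $\sup_t|\partial_t^\gamma u_j|$ already being controlled; (ii) for sufficiency, run Parseval in $x$, use the converse direction of the same cited eigenfunction lemma to pass from \eqref{deccoeff} to \eqref{secondnorm}, and conclude $u\in\mathcal{S}_{\sigma,\mu,\tilde C}$ for some $\tilde C$. The main obstacle is bookkeeping the interplay between the Weyl exponent $M_P/2n$ and the target Gevrey/Gelfand--Shilov indices so that the final exponents are exactly $M!^{m\mu}$ and $\gamma!^\sigma$; this is handled cleanly by invoking the sharp one-variable results of \cite{GraPilRod11,CapGraPilRod19} rather than by re-deriving the $\lambda_j$-summation by hand, together with the observation that only finitely many eigenvalues can be zero so the terms with $\lambda_j=0$ are harmless.
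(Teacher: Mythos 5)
First, note that the paper does not prove this theorem at all: the proof is a citation to \cite[Theorem 2.4]{AviCap22}, so your attempt is being measured against the standard argument behind that result, which is indeed the one you outline (equivalent $P$-based norm \eqref{secondnorm}, diagonalization by the $\varphi_j$, optimization in the power of $P$ via the Weyl asymptotics \eqref{weyl}, Parseval in $x$ plus Gevrey bookkeeping in $t$). Your overall plan is therefore the right one, but as written it has two concrete gaps. In the necessity step you bound $\|P^N\partial_t^\gamma u(t,\cdot)\|_{L^2(\R^n)}$, for fixed $t$, by $\|P^N\partial_t^\gamma u\|_{L^2(\TT^m\times\R^n)}$; this inequality is false (the right-hand side is an average in $t$, not a supremum). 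It is fixable — use a Sobolev embedding in $t$, which costs finitely many extra $t$-derivatives and is harmless for the $(\gamma!)^\sigma C^{|\gamma|}$ bookkeeping, or argue directly from the sup-norm \eqref{firstnorm} — but the fix has to be stated.

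The more serious problem is the sufficiency direction, which you leave unresolved. The ``obstacle'' you run into is not a mathematical one: your Parseval computation, done to the end, gives
\begin{equation*}
\|P^N\partial_t^\gamma u\|_{L^2(\TT^m\times\R^n)}\leq C^{N+|\gamma|+1}(N!)^{M\mu}(\gamma!)^{\sigma},
\end{equation*}
where $M$ is the order of $P$ (since $\sup_{s>0}s^{NM/n}e^{-\varepsilon s^{1/(2n\mu)}}\leq C^{N}(N!)^{2M\mu}$), and this is exactly the exponent appearing in the equivalent norm: the ``$m$'' in \eqref{secondnorm} is the order of the operator in the notation of \cite{GraPilRod11}, as the paper's own use of $(k!)^{-M\mu}$ in Section 6 confirms; likewise in the necessity step the optimization $\inf_N|\lambda_j|^{-N}C^N(N!)^{M\mu}\lesssim\exp(-\varepsilon|\lambda_j|^{1/(M\mu)})$ combined with $|\lambda_j|\gtrsim j^{M/(2n)}$ makes the order cancel and lands precisely on $\exp(-\varepsilon' j^{1/(2n\mu)})$. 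Instead of carrying this out, you declare that the ``crude route loses too much'' and then ``resolve'' the issue by a vague appeal to \cite{GraPilRod11,CapGraPilRod19}, with an incomplete formula and no statement of which lemma is being invoked or why it yields the two-variable estimate \eqref{deccoeff} with the simultaneous $(\gamma!)^\sigma$ control in $t$. As it stands, the key quantitative matching between the Weyl exponent $M/(2n)$, the power $j^{1/(2n\mu)}$, and the factor $(N!)^{M\mu}$ — which is the whole content of the theorem — is asserted rather than proved; tightening exactly this computation (and noting, as you correctly do, that the finitely many $j$ with $\lambda_j=0$ are harmless) would turn your sketch into a complete proof.
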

\begin{proof}
	See	\cite[Theorem 2.4]{AviCap22}.
\end{proof}

\subsection{Full Fourier coefficients}

In this section, we provide the characterization of $\mathcal{S}_{\sigma,\mu} (\mathbb{T}^m\times \R^n)$ and $\mathcal{S}_{\sigma,\mu}' (\mathbb{T}^m\times \R^n)$ in terms of Fourier coefficients with respect to all variables simultaneously, referred to here as the full Fourier coefficients of the elements in these spaces. 

\begin{theorem}\label{charac_full_fourier-functions}
	Let $\{a(\tau,j)\}_{(\tau,j) \in \Z^m \times \N}$ be a sequence of complex numbers, and consider the formal series
	$$
	a(t,x) = \sum_{j \in \N} \sum_{\tau \in \Z^m} a(\tau,j) e^{it\cdot \tau}\varphi_j(x),
	$$
	where $(t,x)\in\TT^m\times\R^n$.
	
	Then, $a \in \mathcal{S}_{\sigma,\mu}$ if and only if there exist $\varepsilon>0$ and $C>0$ such that
	\begin{equation}\label{seq-full-coeff-funct}
		|a(\tau,j)| \leq C 
		\exp\left[-\varepsilon \left(\|\tau\|^{\frac{1}{\sigma}} + j^{\frac{1}{2n\mu}}\right)\right], \ 
		\forall (\tau,j) \in \Z^m\times\N.
	\end{equation}
	
	Moreover, when this condition holds, we have $\widehat{a_j}(\tau) = a(\tau,j)$ for all $(\tau,j) \in \Z^m\times \N$, where
	$$
	\widehat{a_j}(\tau)=\int_{\TT^m}a_j(t)e^{it\cdot\tau}dt,  \quad \text{and} \quad  a_j(t) = \int_{\R^n} a(t,x)\varphi_j(x)dx.
	$$
\end{theorem}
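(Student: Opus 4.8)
The plan is to prove the two implications separately, in each case reducing the full Fourier characterization to the eigenfunction-expansion characterization of Theorem \ref{Thm-estimate-derivatives-functions} composed with the one-variable Gevrey characterization of Theorem \ref{Theorem_char_G}. First I would assume \eqref{seq-full-coeff-funct} and show $a \in \mathcal{S}_{\sigma,\mu}$. For each fixed $j$, set $a_j(t) = \sum_{\tau \in \Z^m} a(\tau,j) e^{it\cdot\tau}$; the bound \eqref{seq-full-coeff-funct} gives $|a(\tau,j)| \leq \bigl(C e^{-\varepsilon j^{1/(2n\mu)}}\bigr) e^{-\varepsilon\|\tau\|^{1/\sigma}}$, so by part (a) of Theorem \ref{Theorem_char_G} the series defining $a_j$ converges in $\mathcal{G}^\sigma(\TT^m)$ and $\widehat{a_j}(\tau) = a(\tau,j)$. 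I then need the quantitative Gevrey bound on $\partial_t^\gamma a_j$ with the exponential decay factor $e^{-\varepsilon' j^{1/(2n\mu)}}$ in $j$ made explicit and uniform; this follows by differentiating the Fourier series term by term and estimating $\sum_\tau \|\tau\|^{|\gamma|} e^{-\varepsilon\|\tau\|^{1/\sigma}} \lesssim C_1^{|\gamma|+1}(\gamma!)^\sigma$ (a standard Gevrey-summation estimate, using $\varepsilon/2$ of the decay to absorb the polynomial and retaining $e^{-(\varepsilon/2)j^{1/(2n\mu)}}$). This yields exactly the hypothesis \eqref{deccoeff} of Theorem \ref{Thm-estimate-derivatives-functions}, hence $a = \sum_j a_j(t)\varphi_j(x) \in \mathcal{S}_{\sigma,\mu}$, and the identification $a_j(t) = \int_{\R^n} a(t,x)\varphi_j(x)\,dx$ comes from that theorem together with orthonormality of $\{\varphi_j\}$.

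For the converse, assume $a \in \mathcal{S}_{\sigma,\mu}$. By Theorem \ref{Thm-estimate-derivatives-functions}, writing $a_j(t) = \int_{\R^n} a(t,x)\varphi_j(x)\,dx$, there exist $C,\varepsilon>0$ with $\sup_t |\partial_t^\gamma a_j(t)| \leq C^{|\gamma|+1}(\gamma!)^\sigma e^{-\varepsilon j^{1/(2n\mu)}}$ for all $j,\gamma$. Fix $j$. Then $a_j \in \mathcal{G}^{\sigma,C}(\TT^m)$ with $\|a_j\|_{\sigma,C} \leq C e^{-\varepsilon j^{1/(2n\mu)}}$, and I need to extract from this a bound $|\widehat{a_j}(\tau)| \leq C' e^{-\varepsilon' \|\tau\|^{1/\sigma}} e^{-\varepsilon j^{1/(2n\mu)}}$. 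The standard route is: integrate by parts $|\gamma|$ times in $\widehat{a_j}(\tau) = (2\pi)^{-m}\int_{\TT^m} a_j(t) e^{-it\cdot\tau}\,dt$ to get $|\tau^\gamma|\,|\widehat{a_j}(\tau)| \leq C^{|\gamma|+1}(\gamma!)^\sigma e^{-\varepsilon j^{1/(2n\mu)}}$ for every $\gamma$, then optimize over $\gamma$ (choosing $|\gamma| \approx c\|\tau\|^{1/\sigma}$ and using Stirling) to convert the Gevrey-indexed polynomial decay into the exponential decay $e^{-\varepsilon'\|\tau\|^{1/\sigma}}$; the $j$-decay factor is untouched and rides along. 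Setting $a(\tau,j) := \widehat{a_j}(\tau)$ then gives \eqref{seq-full-coeff-funct}, and the fact that $a(t,x) = \sum_{j}\sum_{\tau} a(\tau,j)e^{it\cdot\tau}\varphi_j(x)$ with the correct coefficients follows by combining the two Fourier expansions (in $t$ and in $x$) and invoking uniqueness.

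I do not expect a single dramatic obstacle; the proof is essentially a two-step "tensorization" of known one-variable characterizations, and both results being invoked (Theorems \ref{Theorem_char_G} and \ref{Thm-estimate-derivatives-functions}) are cited as available. The part requiring the most care is the bookkeeping of the \emph{two} independent exponential parameters: one must make sure the $\varepsilon$ governing decay in $\tau$ and the $\varepsilon$ governing decay in $j$ are handled so that a single $\varepsilon$ (after shrinking) works for the joint estimate \eqref{seq-full-coeff-funct}, and that the constant $C$ absorbing the Gevrey summation does not secretly depend on $j$. A secondary technical point is justifying term-by-term differentiation and the interchange of the $\sum_j$ and $\sum_\tau$ summations, which is legitimate because of the absolute and (on the relevant Banach pieces) uniform convergence guaranteed by the exponential bounds; I would dispatch this with a brief appeal to dominated convergence / the completeness of $\mathcal{S}_{\sigma,\mu,C}$ rather than belabor it.
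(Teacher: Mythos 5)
Your proposal is correct and follows essentially the same route as the paper's proof: the forward direction via Theorem \ref{Theorem_char_G}(a) plus the standard Gevrey summation estimate $\sum_{\tau}\|\tau\|^{|\gamma|}e^{-\varepsilon\|\tau\|^{1/\sigma}}\leq C^{|\gamma|+1}(\gamma!)^{\sigma}$ to land in the hypothesis \eqref{deccoeff} of Theorem \ref{Thm-estimate-derivatives-functions}, and the converse via the bound $|\tau^{\gamma}|\,|\widehat{a_j}(\tau)|\leq C^{|\gamma|+1}(\gamma!)^{\sigma}e^{-\varepsilon j^{1/(2n\mu)}}$ optimized over $\gamma$. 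Your version merely makes explicit (integration by parts, Stirling, the bookkeeping of the two $\varepsilon$'s) what the paper leaves implicit.
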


\begin{proof}
	Assume that the sequence $\{a(\tau,j)\}$ satisfies the condition \eqref{seq-full-coeff-funct}. It follows from item $(a)$ of Theorem \ref{Theorem_char_G} that, for each $j\in \N$, the function 
	$$
	a_j(t) = \sum_{\tau \in \Z^m} a(\tau,j) e^{it \cdot \tau} \in \mathcal{G}^{\sigma}(\TT^m).
	$$
	
	Now, for each fixed $\gamma \in \Z^m_+$, by using standard estimates and \eqref{seq-full-coeff-funct}, we obtain
	$$
	|\partial^\gamma a_j(t)|  \leq 
	C \exp\left(-\varepsilon j^{1/2n\mu}\right) 
	\sum_{\tau \in \Z^m} \|\tau\|^{|\gamma|}\exp\left(-\varepsilon \|\tau\|^{1/\sigma}\right).
	$$
	
	Since, for every $\tau \in \Z^m$, we have
	$$
	\|\tau\|^{|\gamma|}\exp\left(-\varepsilon/2 \|\tau\|^{1/\sigma}\right) \leq C_{\varepsilon}^{|\gamma|} (\gamma!)^{\sigma},
	$$
	then
	$$
	|\partial^\gamma a_j(t)|  \leq 
	C C_{\varepsilon}^{|\gamma|} C_1 \exp\left(-\varepsilon/2 j^{1/2n\mu}\right), 
	$$
	where 
	$C_1 = \sum_{\tau \in \Z^m}\exp\left(-\varepsilon/2 \|\tau\|^{1/\sigma}\right)$. Therefore, $a \in \mathcal{S}_{\sigma,\mu}$ in view of \eqref{deccoeff}.
	
	Conversely, if $a \in \mathcal{S}_{\sigma,\mu}$, then for $\gamma \in \N^m$ we have
	$$
	|\widehat{a_j}(\tau)|\leq C \dfrac{C^{|\gamma|} (\gamma !)^{\sigma}}{|\tau^{\gamma}|}  \exp\left(-\varepsilon j^{1/2n\mu}\right), \forall \tau\neq 0.
	$$
	Taking the infimum on $|\gamma|$ of the right-hand side, we obtain the result.
\end{proof}

\begin{theorem}\label{charac_full_fourier_ultradistr}
	Let $\{a(\tau,j): (\tau,j) \in \Z^m \times \N\}$ be a sequence of complex numbers, and consider the formal series
	$$
	a(t,x) = \sum_{j \in \N} \sum_{\tau \in \Z^m} a(\tau,j) e^{it\cdot \tau}\varphi_j(x),
	$$
	where $(t,x)\in\TT^m\times\R^n$.
	
	Then, $a \in \mathcal{S}_{\sigma,\mu}'$ if and only if for every $\varepsilon>0$, there exists $C_{\varepsilon}>0$ such that
	\begin{equation}\label{seq-full-coeff-distr}
		|a(\tau,j)| \leq C_{\varepsilon} 
		\exp\left[\varepsilon \left(\|\tau\|^{\frac{1}{\sigma}} + j^{\frac{1}{2n\mu}}\right)\right], \ 
		\forall \tau \in \Z^m.
	\end{equation}
	
	Moreover, when this condition holds, we have $ a(\tau,j)=\widehat{a_j}(\tau)$ for all $(\tau,j) \in \Z^m\times \N$, where
	$$
	\langle  a_j(t) \, , \, \psi(t) \rangle \doteq 
	\langle  a ,  \psi(t)\varphi_j(x) \rangle, \quad \psi \in \mathcal{G}^{\sigma}(\TT^m).
	$$
\end{theorem}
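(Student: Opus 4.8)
The plan is to mirror the proof of Theorem~\ref{charac_full_fourier-functions}, replacing item~$(a)$ of Theorem~\ref{Theorem_char_G} and the decay estimate~\eqref{deccoeff} by their distributional counterparts, namely item~$(b)$ of Theorem~\ref{Theorem_char_G} and the growth estimate~\eqref{estimate-distr} from the eigenfunction characterization of $\mathcal{S}_{\sigma,\mu}'$ recalled just before Theorem~\ref{Thm-estimate-derivatives-functions}. The elementary fact underlying the whole argument is that, for a scalar $h>0$,
\[
\|e^{it\cdot\tau}\|_{\sigma,h}=\sup_{\gamma\in\Z^m_+}|\tau^\gamma|\,h^{-|\gamma|}(\gamma!)^{-\sigma}=\prod_{i=1}^m\Big(\sup_{k\ge0}\frac{(|\tau_i|/h)^k}{(k!)^{\sigma}}\Big)\le C_h\exp\!\big(c_h\|\tau\|^{1/\sigma}\big),
\]
where $c_h\to0$ as $h\to+\infty$; this follows from the standard bound $\sup_{k\ge0}s^k/(k!)^{\sigma}\le e^{\sigma s^{1/\sigma}}$ together with $\sum_i|\tau_i|^{1/\sigma}\le m\|\tau\|^{1/\sigma}$. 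Dually, integrating by parts $|\gamma|$ times and optimizing in $\gamma$ gives, for $\psi\in\mathcal{G}^{\sigma,h}(\TT^m)$, the decay $|\langle e^{it\cdot\tau},\psi\rangle|\le C\|\psi\|_{\sigma,h}\exp(-c_h\|\tau\|^{1/\sigma})$ with the same rate $c_h>0$.

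\emph{Necessity.} Suppose the formal series defines an element $a\in\mathcal{S}_{\sigma,\mu}'$. By the cited characterization we may write $a=\sum_{j\in\N}a_j(t)\varphi_j(x)$ with $a_j\in(\mathcal{G}^{\sigma})'(\TT^m)$ defined by the partial pairing $\langle a_j,\psi\rangle=\langle a,\psi\varphi_j\rangle$, and for every $\varepsilon>0$, $h>0$ there exists $C_{\varepsilon,h}>0$ with $|\langle a_j,\psi\rangle|\le C_{\varepsilon,h}\|\psi\|_{\sigma,h}\exp(\varepsilon j^{1/(2n\mu)})$ for all $j$ and all $\psi\in\mathcal{G}^{\sigma,h}(\TT^m)$. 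Testing the identity between the two expansions of $a$ against $\psi(t)\varphi_j(x)$ and using the orthonormality of $\{\varphi_j\}$ shows $\widehat{a_j}(\tau)=a(\tau,j)$. Taking $\psi(t)=e^{it\cdot\tau}$ and inserting the first displayed estimate gives $|a(\tau,j)|\le C_{\varepsilon,h}C_h\exp(c_h\|\tau\|^{1/\sigma})\exp(\varepsilon j^{1/(2n\mu)})$. Given $\varepsilon_0>0$, choose first $h$ so large that $c_h<\varepsilon_0$, then take $\varepsilon=\varepsilon_0$; this yields~\eqref{seq-full-coeff-distr}, and the identification of the full Fourier coefficients has already been noted.

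\emph{Sufficiency.} Conversely, assume~\eqref{seq-full-coeff-distr}. For each fixed $j$, item~$(b)$ of Theorem~\ref{Theorem_char_G} shows that $a_j(t):=\sum_{\tau\in\Z^m}a(\tau,j)e^{it\cdot\tau}$ converges in $(\mathcal{G}^{\sigma})'(\TT^m)$ and satisfies $\widehat{a_j}(\tau)=a(\tau,j)$. To invoke the converse part of the eigenfunction characterization it remains to verify~\eqref{estimate-distr}. Fix $\varepsilon>0$ and $h>0$ and set $\varepsilon'=\min\{\varepsilon,c_h/2\}$. Writing $\langle a_j,\psi\rangle=\sum_\tau a(\tau,j)\langle e^{it\cdot\tau},\psi\rangle$ and combining the decay estimate for $\langle e^{it\cdot\tau},\psi\rangle$ with the bound $|a(\tau,j)|\le C_{\varepsilon'}\exp[\varepsilon'(\|\tau\|^{1/\sigma}+j^{1/(2n\mu)})]$, the series over $\tau$ converges because $\varepsilon'<c_h$, and we obtain $|\langle a_j,\psi\rangle|\le C_{\varepsilon,h}\|\psi\|_{\sigma,h}\exp(\varepsilon' j^{1/(2n\mu)})\le C_{\varepsilon,h}\|\psi\|_{\sigma,h}\exp(\varepsilon j^{1/(2n\mu)})$, which is~\eqref{estimate-distr}. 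Hence $a=\sum_j a_j(t)\varphi_j(x)\in\mathcal{S}_{\sigma,\mu}'$ with $\langle a_j,\psi\rangle=\langle a,\psi\varphi_j\rangle$, and $\widehat{a_j}(\tau)=a(\tau,j)$ gives the claimed description of the full Fourier coefficients.

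\emph{Main obstacle.} No step is deep; the only point requiring care is the bookkeeping of the parameters, i.e.\ keeping track of how the free constant $h$ in the Gevrey norms must be coupled with the $\varepsilon$'s appearing in both~\eqref{estimate-distr} and~\eqref{seq-full-coeff-distr}. Concretely, one needs the precise fact that the rate $c_h$ in $\|e^{it\cdot\tau}\|_{\sigma,h}\asymp\exp(c_h\|\tau\|^{1/\sigma})$ — equivalently in the Fourier-coefficient decay of $\mathcal{G}^{\sigma,h}(\TT^m)$ functions — tends to $0$ as $h\to+\infty$; the choices ``$\varepsilon=\varepsilon_0$, $h$ large'' in the necessity part and ``$\varepsilon'=\min\{\varepsilon,c_h/2\}$'' in the sufficiency part are exactly what make the two families of estimates compatible. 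Everything else is a verbatim adaptation of the proof of Theorem~\ref{charac_full_fourier-functions}.
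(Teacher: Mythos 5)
Your argument is correct, and its necessity half is essentially the paper's: both pair the partial coefficients $a_j$ (defined by $\langle a_j,\psi\rangle=\langle a,\psi\varphi_j\rangle$) against the exponentials $e^{-it\cdot\tau}$, combine the growth bound \eqref{estimate-distr} with the estimate $\|e^{it\cdot\tau}\|_{\sigma,h}\leq \exp\bigl(c_h\|\tau\|^{1/\sigma}\bigr)$, and use that $c_h\to 0$ as $h\to+\infty$ to absorb the exponential growth in $\tau$ into an arbitrarily small $\varepsilon$. Where you diverge is the sufficiency half. The paper forms the partial sums $A_\ell=\sum_{k\le \ell}a_k(t)\varphi_k(x)$, pairs $A_{\ell+\eta}-A_\eta$ against an arbitrary $\theta\in\mathcal{S}_{\sigma,\mu}$, and uses its own Theorem \ref{charac_full_fourier-functions} (decay of $\widehat{\theta}(-\tau,k)$) together with \eqref{seq-full-coeff-distr} at $\varepsilon=\widetilde{\varepsilon}/2$ to get a Cauchy sequence $\{A_\ell(\theta)\}$, hence weak convergence to some $a\in\mathcal{S}_{\sigma,\mu}'$. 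You instead verify the partial-Fourier growth estimate \eqref{estimate-distr} for the family $\{a_j\}$ directly, by writing $\langle a_j,\psi\rangle=\sum_\tau a(\tau,j)\langle e^{it\cdot\tau},\psi\rangle$ and exploiting the Fourier-coefficient decay of $\psi\in\mathcal{G}^{\sigma,h}(\TT^m)$, and then invoke the converse part of the eigenfunction-expansion characterization of $\mathcal{S}_{\sigma,\mu}'$ recalled in Section 2.1 from \cite{AviCap22}. Both routes are sound; the paper's is more self-contained within this article (it only needs the function-side Theorem \ref{charac_full_fourier-functions}, applied to the test function, plus weak-$*$ completeness for the limit), while yours is shorter, makes the coupling between $h$ and the $\varepsilon$'s fully explicit (your choice $\varepsilon'=\min\{\varepsilon,c_h/2\}$), but delegates the final step to the converse implication quoted without proof from \cite{AviCap22}. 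The identification $\widehat{a_j}(\tau)=a(\tau,j)$ is handled equally informally in both arguments and is unproblematic.
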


\begin{proof}
	Assume first that $\{a(\tau,j)\}$ satisfies the condition \eqref{seq-full-coeff-distr}. Then, it follows from item $(b).$ of Theorem  \ref{Theorem_char_G} that 
	$$
	a_j(t) = \sum_{\tau \in \Z^m} a(\tau,j) e^{it \cdot \tau} \in \mathcal{G}^{\sigma}(\TT^m), \quad j \in \N.
	$$ 
	
	For each $\ell\in\N$, define $A_\ell\in \mathcal{S}_{\sigma,\mu}'$ by 
	$$
	A_\ell=  \sum_{k=1}^{\ell} a_k(t)\varphi_k(x),
	$$
	and fix $\theta(t,x) \in \mathcal{S}_{\sigma,\mu}$. Note that
	\begin{align*}
		\langle A_{\ell+\eta} - A_{\eta}, \theta \rangle & = 
		\sum_{k=\eta+1}^{\ell}  \sum_{\tau \in \Z^m} \langle  a(\tau,k) \exp (it \cdot \tau) \varphi_k(x), \theta \rangle \\
		& = \sum_{k=\eta+1}^{\ell}  \sum_{\tau \in \Z^m} 
		a(\tau,k) \widehat{\theta}(-\tau,k)
	\end{align*}
	
	Since $\{\widehat{\theta}(-\tau,k)\}$ satisfies condition \eqref{seq-full-coeff-funct}, there exist $\widetilde{\varepsilon}>0$  and  $C>0$ such that
	\begin{equation*}
		|\widehat{\theta}(-\tau,k)| \leq C 
		\exp\left[-\widetilde{\varepsilon} \left(\|\tau\|^{\frac{1}{\sigma}} + j^{\frac{1}{2n\mu}}\right)\right], \quad \tau \in \Z^m.
	\end{equation*}
	
	Then, if we take $\varepsilon = \widetilde{\varepsilon}/2$ in \eqref{seq-full-coeff-distr} we obtain $C_{\widetilde{\varepsilon}} >0$ satisfying
	\begin{equation*}
		|a(\tau,j)| \leq C_{\widetilde{\varepsilon}}
		\exp\left[\widetilde{\varepsilon}/2 \left(\|\tau\|^{\frac{1}{\sigma}} + j^{\frac{1}{2n\mu}}\right)\right], \quad \tau \in \Z^m.
	\end{equation*}
	
	Therefore, 
	$$
	|\langle A_{\ell+\eta} - A_{\eta}, \theta \rangle| \leq
	C C_{\widetilde{\varepsilon}} C_1 
	\sum_{k=\eta+1}^{\ell}  \exp \left(-\widetilde{\varepsilon}/2 \, k^{\frac{1}{2n\mu}}\right)
	$$
	where
	$
	C_1 = \sum_{\tau \in \Z^m} 	\exp \left(-\widetilde{\varepsilon}/2 \, \|\tau\|^{\frac{1}{\sigma}}\right).
	$
	
	Hence, $\{A_\ell (\theta)\}_{\ell\in\N}$ is a Cauchy sequence in $\C$, for all $\theta \in \mathcal{S}_{\sigma,\mu}$, implying that $\{A_\ell\}_{\ell\in\N}$ converges to some $a \in \mathcal{S}_{\sigma,\mu}'$.
	
	Conversely, if $a \in \mathcal{S}_{\sigma,\mu}'$, then
	$$
	a(t,x) = \sum_{j \in \N} \sum_{\tau \in \Z^m} a_j(t) e^{it\cdot \tau} \varphi_j(x)
	$$
	where $a_j(t) \in \mathcal{G}^{\sigma}(\TT^m)$ and satisfy \eqref{estimate-distr}. Note that for every $\varepsilon>0$ and $h>0$ there is $C_{\varepsilon,h}>0$
	such that
	$$
	|\widehat{a}_j(\tau)| = (2\pi)^{-m}|\langle a_j(t) , \exp(-it\tau) \rangle|
	\leq C_{\varepsilon,h} \|\exp(-it\tau)\|_{\sigma,h}\exp(\varepsilon j^{1/2n\mu}),
	$$
	since $t\mapsto \exp(-it\tau)$ belongs to $\mathcal{G}^{\sigma,h}(\TT^m)$, for all $\tau \in \Z^m$ and all $h>0$. In particular, for $h>1$ we get
	\begin{align*}
		\|\exp(-it\tau)\|_{\sigma,h} & =	 \sup_{\gamma \in \Z_+^n}\left \{\sup_{t \in \TT^m} |\partial^{\gamma}\exp(-it\tau)|  h^{-\gamma}(\gamma!)^{-\sigma}\right\} \\
		& \leq  \sup_{\gamma \in \Z_+^n} (\gamma !)^{-\sigma} h^{-|\gamma|} \exp(\varepsilon \|\tau\|^{1/\sigma}) \\
		& \leq \exp(\varepsilon \|\tau\|^{1/\sigma}).
	\end{align*}
	
	Therefore, given $\varepsilon >0$ we obtain $C_{\varepsilon}$ such that
	$$
	|\widehat{a}_j(\tau)| \leq C_{\varepsilon}
	\exp(\varepsilon \|\tau\|^{1/\sigma} + j^{1/2n\mu}),
	$$
	implying that the sequence $\{a(\tau,j)\}$ satisfies the estimate \eqref{seq-full-coeff-distr}, and therefore the proof is complete.
\end{proof}

\section{Global hypoellipticity}

In this section we prove Theorem \ref{GH-thm}. We shall split the proof into two Propositions, one addressing sufficiency and the other necessity.

First, recall that the set of zeros of the symbol is 
\begin{equation*}
	\mathcal{N} = \{(\tau,j)\in\mathbb{Z}^m\times\mathbb{N}; \sigma_{\mathbb{L}}(\tau,j)=0 \}.
\end{equation*}

Note that when the set $\mathcal{N}$ has infinitely many elements, it is possible to construct $u \in \mathcal{S}_{\sigma,\mu}' \setminus \mathcal{S}_{\sigma,\mu}$  solution of the  equation $\mathbb{L} u=0$. Consequently, the system $\mathbb{L}$ is not $\mathcal{S}_{\sigma,\mu}$-globally hypoelliptic. Indeed, consider the following sequence: 
\[
\widehat{u_j}(\tau) = \begin{cases}
	1, & \text{if } \sigma_{\mathbb{L}}(\tau,j)=0, \\
	0, & \text{otherwise}.
\end{cases}
\]
We observe that this sequence corresponds to an element $u\in \mathcal{S}_{\sigma,\mu}'$ but not in $\mathcal{S}_{\sigma,\mu}$, which satisfies $\mathbb{L}_r u=0$, for all $r=1,\ldots,\ell$.

Thus, henceforth we will assume that $\mathcal{N}$ remains finite throughout this paper.

\begin{proposition}\label{Necess2-GH}
	 Assume that $\mathcal{N}$ is finite and that for any $\varepsilon>0$ there exists $C_\varepsilon>0$ such that
	\begin{equation*}
		\|\sigma_{\LL}(\tau,j)\|\geq C_\varepsilon  \exp\left[-\varepsilon(\|\tau\|^{1/\sigma}+j^{1/(2n\mu)}) \right]
	\end{equation*}
	for all  $(\tau,j)\in\Z^m\times\N$ such that $\sigma_{\LL}(\tau,j) \neq 0$. Then $\LL$ is $\mathcal{S}_{\sigma,\mu}$-globally hypoelliptic.
\end{proposition}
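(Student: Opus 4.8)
The plan is to work entirely on the Fourier side, using the full-Fourier-coefficient characterizations of $\mathcal{S}_{\sigma,\mu}$ and $\mathcal{S}'_{\sigma,\mu}$ from Theorems~\ref{charac_full_fourier-functions} and \ref{charac_full_fourier_ultradistr}. Suppose $u \in \mathcal{S}'_{\sigma,\mu}$ and $L_r u = f_r \in \mathcal{S}_{\sigma,\mu}$ for all $r = 1,\ldots,\ell$; write $u = \sum_{j}\sum_\tau a(\tau,j)e^{it\cdot\tau}\varphi_j(x)$ and $f_r = \sum_j\sum_\tau \widehat{f_{r,j}}(\tau)e^{it\cdot\tau}\varphi_j(x)$. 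Since $u_j$ and $f_{r,j}$ are related by $\sigma_{L_r}(\tau,j)\,a(\tau,j) = \widehat{f_{r,j}}(\tau)$, the goal is to show that $\{a(\tau,j)\}$ satisfies the exponential decay bound \eqref{seq-full-coeff-funct}, which is exactly what places $u$ in $\mathcal{S}_{\sigma,\mu}$.

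First I would dispose of the finitely many pairs $(\tau,j)\in\mathcal{N}$: since $\mathcal{N}$ is finite, those coefficients $a(\tau,j)$ contribute only a finite sum, which is harmless for membership in $\mathcal{S}_{\sigma,\mu}$ (a finite linear combination of $e^{it\cdot\tau}\varphi_j$ lies in $\mathcal{S}_{\sigma,\mu}$ because each $\varphi_j \in \mathcal{S}^{1/2}_{1/2}(\R^n)\subset \mathcal{S}^\mu_\mu(\R^n)$). For the remaining pairs, $\sigma_{\LL}(\tau,j)\neq 0$, so there is some index $r=r(\tau,j)$ with $|\sigma_{L_r}(\tau,j)| = \|\sigma_{\LL}(\tau,j)\| \neq 0$, and hence
\[
  |a(\tau,j)| = \frac{|\widehat{f_{r,j}}(\tau)|}{|\sigma_{L_r}(\tau,j)|} \leq \frac{|\widehat{f_{r,j}}(\tau)|}{\|\sigma_{\LL}(\tau,j)\|}.
\]
Since $f_r \in \mathcal{S}_{\sigma,\mu}$, Theorem~\ref{charac_full_fourier-functions} gives $\delta>0$ and $C>0$ (uniform in $r$, taking the min of the finitely many $\delta_r$ and max of the $C_r$) with $|\widehat{f_{r,j}}(\tau)| \leq C\exp[-\delta(\|\tau\|^{1/\sigma}+j^{1/(2n\mu)})]$. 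Now apply the hypothesis with $\varepsilon = \delta/2$: there is $C_{\delta/2}>0$ such that $\|\sigma_{\LL}(\tau,j)\| \geq C_{\delta/2}\exp[-\tfrac{\delta}{2}(\|\tau\|^{1/\sigma}+j^{1/(2n\mu)})]$. Combining,
\[
  |a(\tau,j)| \leq \frac{C}{C_{\delta/2}}\exp\!\left[-\tfrac{\delta}{2}\bigl(\|\tau\|^{1/\sigma}+j^{1/(2n\mu)}\bigr)\right],
\]
which is precisely the bound \eqref{seq-full-coeff-funct} with $\varepsilon' = \delta/2$. By Theorem~\ref{charac_full_fourier-functions}, $u \in \mathcal{S}_{\sigma,\mu}$, completing the argument.

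The only genuinely delicate point is the bookkeeping of constants: the index $r$ realizing the maximum depends on $(\tau,j)$, so one must use a single pair $(C,\delta)$ valid for all $r$ simultaneously, which is legitimate since $\ell$ is finite. I would also want to note explicitly that the dual pairing formula $\sigma_{L_r}(\tau,j)\,\widehat{u_j}(\tau) = \widehat{f_{r,j}}(\tau)$, which is stated earlier for classical solutions, extends to $u \in \mathcal{S}'_{\sigma,\mu}$ by testing against $\psi(t)\varphi_j(x)$ and using that $L_r$ acts on the ultradistribution; this is the one spot where a short verification is needed rather than a pure citation. Everything else is a direct consequence of the two characterization theorems, so no serious obstacle is anticipated.
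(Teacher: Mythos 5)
Your proposal is correct and follows essentially the same route as the paper: discretize via the full Fourier characterization, discard the finitely many pairs in $\mathcal{N}$, pick the index realizing $\|\sigma_{\LL}(\tau,j)\|$, and combine the decay of $\widehat{f_{r,j}}(\tau)$ with the hypothesis at $\varepsilon=\delta/2$. Your explicit remark about choosing a single pair $(C,\delta)$ uniform over the finitely many indices $r$ (since $r^*$ depends on $(\tau,j)$) is in fact slightly more careful than the paper's own wording.
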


\begin{proof}
	Let us start by defining, for each $(\tau,j) \in \mathbb{Z}^m\times \mathbb{N}$, $r^* = r(\tau,j)\in\{1,\ldots,\ell\}$ as the index such that $$|\sigma_{L_{r^*}}(\tau,j)| = \|\sigma_{\mathbb{L}}(\tau,j)\| = \max_{1\leq r\leq \ell} |\sigma_{L_r}(\tau,j)|.$$ 
	
	Now, let $u \in \mathcal{S}_{\sigma,\mu}'$ be a solution of $L_r u =f_r \in \mathcal{S}_{\sigma,\mu}$ for $r=1,\ldots,\ell$. Our goal is to prove that $u \in \mathcal{S}_{\sigma,\mu}$.
	
	Due to the finiteness of the set $\mathcal{N}$, it is sufficient to analyze the behavior of $\widehat{u_j}(\tau)$ outside $\mathcal{N}$. If $(\tau, j) \notin \mathcal{N}$, then $\widehat{u_j}(\tau) = \sigma_{L_{r^*}}(\tau,j)^{-1} \widehat{f_{r^*,j}}(\tau)$, leading to
	\begin{align*}
		|\widehat{u_j}(\tau)| &= |\sigma_{L_{r^*}}(\tau,j)|^{-1}|\widehat{f_{r^*,j}}(\tau)| \\ 
		&= \|\sigma_{\LL}(\tau,j)\|^{-1} |\widehat{f_{r^*,j}}(\tau)| \\
		&\leq C_\varepsilon  \exp\left[\varepsilon (\|\tau\|^{1/\sigma}+j^{1/(2n\mu)}) \right]|\widehat{f_{r^*,j}}(\tau)|.
	\end{align*}
	
	Since $f_{r^*} \in \mathcal{S}_{\sigma,\mu}$, then there is   $\varepsilon_{r^*}>0$ and $C_{r^*}>0$ such that 
	$$ |\widehat{f_{{r^*},j}}(\tau)| \leq C_{r^*} \exp\left[-\varepsilon_{r^*}(\|\tau\|^{1/\sigma}+j^{1/(2n\mu)})\right],
	$$    
	for every $(\tau,j)\in\Z^m\times \N.$
	
	Therefore, for $\varepsilon=\varepsilon_{r^*}/2$ we have
	$$
	|\widehat{u_j}(\tau)|  \leq C_\varepsilon  C_{r^*} \exp\left[-\frac{\varepsilon_{r^*}}{2}(\|\tau\|^{1/\sigma}+j^{1/(2n\mu)})\right].
	$$
	
	Thus, $u \in \mathcal{S}_{\sigma,\mu}$, implying that $\LL$ is $\mathcal{S}_{\sigma,\mu}$-globally hypoelliptic.
\end{proof}

\begin{proposition}\label{Necess2-GH}
	If $\LL$ is $\mathcal{S}_{\sigma,\mu}$-globally hypoelliptic, then for any $\varepsilon>0$, there exists $C_\varepsilon>0$ such that
	\begin{equation*}
		|\sigma_{\LL}(\tau,j)|\geq C_\varepsilon  \exp\left[-\varepsilon(\|\tau\|^{1/\sigma}+j^{1/(2n\mu)}) \right]
	\end{equation*}
	for all  $(\tau,j)\in\Z^m\times\N$ such that $\sigma_{\LL}(\tau,j) \neq 0$.
\end{proposition}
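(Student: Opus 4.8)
The plan is to argue by contraposition: assuming the estimate fails, I will build an ultradistribution $u \in \mathcal{S}'_{\sigma,\mu}$ with $\LL u \in \mathcal{S}_{\sigma,\mu}$ (in fact with each $L_r u$ given by an explicit, rapidly decaying series) but $u \notin \mathcal{S}_{\sigma,\mu}$, contradicting global hypoellipticity. So suppose there is $\varepsilon_0 > 0$ such that for every $n \in \N$ there exists $(\tau_n, j_n) \in \Z^m \times \N$ with $\sigma_{\LL}(\tau_n,j_n) \neq 0$ and
$$
0 < \|\sigma_{\LL}(\tau_n,j_n)\| < \frac{1}{n}\,\exp\bigl[-\varepsilon_0(\|\tau_n\|^{1/\sigma} + j_n^{1/(2n\mu)})\bigr].
$$
Since $\mathcal{N}$ is finite and the symbol grows polynomially in $\tau$ for fixed $j$ (and, via the Weyl asymptotics \eqref{weyl}, the $\lambda_{j}$ go to infinity), the pairs $(\tau_n,j_n)$ must be pairwise distinct and the quantities $\|\tau_n\| + j_n$ must be unbounded; passing to a subsequence I may assume $\|\tau_n\|^{1/\sigma} + j_n^{1/(2n\mu)} \to \infty$ and the $(\tau_n,j_n)$ are all distinct and outside $\mathcal{N}$.

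Next I define the candidate solution through its full Fourier coefficients: set $\widehat{u_{j}}(\tau) = 1$ if $(\tau,j) = (\tau_n,j_n)$ for some $n$, and $\widehat{u_{j}}(\tau) = 0$ otherwise. By Theorem \ref{charac_full_fourier_ultradistr} this sequence defines an element $u \in \mathcal{S}'_{\sigma,\mu}$, since $|\widehat{u_j}(\tau)| \le 1 \le C_\varepsilon \exp[\varepsilon(\|\tau\|^{1/\sigma} + j^{1/(2n\mu)})]$ trivially for every $\varepsilon > 0$. On the other hand $u \notin \mathcal{S}_{\sigma,\mu}$: if it were, Theorem \ref{charac_full_fourier-functions} would force $|\widehat{u_{j_n}}(\tau_n)| = 1 \le C\exp[-\varepsilon(\|\tau_n\|^{1/\sigma} + j_n^{1/(2n\mu)})]$ for some fixed $\varepsilon, C > 0$ and all $n$, which is impossible because the right-hand side tends to $0$.

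It remains to check that $f_r := L_r u \in \mathcal{S}_{\sigma,\mu}$ for every $r$. The full Fourier coefficient of $f_r$ at $(\tau,j)$ is $\sigma_{L_r}(\tau,j)\widehat{u_j}(\tau)$, which vanishes unless $(\tau,j) = (\tau_n,j_n)$, in which case it equals $\sigma_{L_r}(\tau_n,j_n)$ and hence is bounded in absolute value by $\|\sigma_{\LL}(\tau_n,j_n)\| < \tfrac1n \exp[-\varepsilon_0(\|\tau_n\|^{1/\sigma} + j_n^{1/(2n\mu)})] \le \exp[-\varepsilon_0(\|\tau_n\|^{1/\sigma} + j_n^{1/(2n\mu)})]$. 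Thus $|\widehat{f_{r,j}}(\tau)| \le \exp[-\varepsilon_0(\|\tau\|^{1/\sigma} + j^{1/(2n\mu)})]$ for all $(\tau,j)$, so by Theorem \ref{charac_full_fourier-functions} we get $f_r \in \mathcal{S}_{\sigma,\mu}$. This produces the required contradiction and proves the proposition.

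The step requiring the most care is the reduction to a subsequence of \emph{distinct} pairs $(\tau_n,j_n)$ with $\|\tau_n\|^{1/\sigma} + j_n^{1/(2n\mu)} \to \infty$: one must rule out the possibility that infinitely many violations of the estimate occur at a bounded set of $(\tau,j)$. This is where the hypothesis that $\mathcal{N}$ is finite enters, together with the observation that on any bounded set of $(\tau,j)$ outside $\mathcal{N}$ the quantity $\|\sigma_{\LL}(\tau,j)\|$ is bounded below by a positive constant (a finite minimum of finitely many nonzero numbers), so it cannot be made smaller than $\tfrac1n$ times a positive lower bound of the exponential factor for large $n$; hence the violating pairs must escape to infinity, and distinctness follows after extracting a subsequence. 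The rest is a routine application of the two characterization theorems from Section 2.
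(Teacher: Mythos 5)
Your proof is correct and follows essentially the same route as the paper's: negate the estimate, place (unit) Fourier coefficients on the violating frequencies $(\tau_n,j_n)$ to obtain $u\in\mathcal{S}'_{\sigma,\mu}\setminus\mathcal{S}_{\sigma,\mu}$ with each $L_r u\in\mathcal{S}_{\sigma,\mu}$ via the characterization theorems, contradicting global hypoellipticity (the paper weights its coefficients by $|(\tau_k,j_k)|$ instead of $1$, an inessential difference). Your explicit use of the $1/n$ factor from the correct negation, together with the argument that the violating pairs must be distinct and escape to infinity, is a detail the paper leaves implicit but which is genuinely needed to conclude $u\notin\mathcal{S}_{\sigma,\mu}$.
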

\begin{proof}
	Let us prove this result by contradiction. Assume that there exists $\varepsilon >0$ such that for each $k\in\N$ there exists $(\tau_k,j_k)\in\Z^m\times\N$ satisfying
	\begin{equation*}
		0<\|\sigma_{\LL}(\tau_k,j_k)\| <  \exp\left[-\varepsilon (\|\tau_k\|^{1/\sigma} + j_k^{1/(2n\mu)}) \right].
	\end{equation*}
	
	Now, for each $r=1,\ldots,\ell$, we define 
	$$
	\widehat{f_{r,j}}(\tau)= \begin{cases}
		\sigma_{L_r}(\tau_k,j_k)|(\tau_k,j_k)| , & \text{if } (\tau,j) = (\tau_k,j_k) \text{ for some } k\in \N, \\
		0, & \text{otherwise. }
	\end{cases}
	$$
	
	Note that these functions satisfy the compatibility conditions \eqref{comp-cond-system} and \eqref{comp-cond-op}. Thus,  to prove that $F=(f_1,\ldots,f_\ell) \in \mathscr{E}_\LL$, we have only to show that each component belongs to the space $\mathcal{S}_{\sigma,\mu}$.
	
	Let  $C>0$ be a constant such that for any $ k\in\N$ we have
	\begin{equation*}
		|(\tau_k,j_k)|\exp\left[-\frac{\varepsilon}{2}(\|\tau_k\|^{1/\sigma}+j_k^{1/(2n\mu)}) \right] < C.
	\end{equation*}    
	
	Thus
	\begin{align*}
		|\widehat{f_{r,j}}(\tau)| &= |\sigma_{L_r}(\tau_k,j_k)| |(\tau_k,j_k)| \\[2mm]
		&\leq \|\sigma_{\LL}(\tau_k,j_k)\| |(\tau_k,j_k)| \\
		&\leq  \exp\left[-\varepsilon(\|\tau_k\|^{1/\sigma}+j_k^{1/(2n\mu)}) \right] |(\tau_k,j_k)|  \\[1mm]
		&\leq  \exp\left[-\varepsilon (\|\tau_k\|^{1/\sigma}+j_k^{1/(2n\mu)}) \right] C \exp\left[\frac{\varepsilon}{2}(\|\tau_k\|^{1/\sigma}+j_k^{1/(2n\mu)}) \right]  \\
		&\leq C\exp\left[-\frac{\varepsilon}{2}(\|\tau_k\|^{1/\sigma}+j_k^{1/(2n\mu)}) \right],
	\end{align*}
	then $f_r \in \mathcal{S}_{\sigma,\mu},$ for every $r=1,\ldots,\ell$. 
	
	Now, if we define 
	$$
	\widehat{u_j}(\tau) = 
	\begin{cases}
		|(\tau_k,j_k)| , & \text{if } (\tau,j) = (\tau_k,j_k) \text{ for some } k\in \N, \\
		0, & \text{otherwise. }
	\end{cases}
	$$
	it is clear that   
	$$
	u(t,x)= \sum_{j \in \N}\sum_{\tau \in\Z^m} \widehat{u_j}(\tau)e^{i\tau \cdot t}\varphi_{j}(x) \in \mathcal{S}_{\sigma,\mu}' \setminus  \mathcal{S}_{\sigma,\mu} 
	$$
	and that $\LL u=F$. This proves that $\LL$ is not $\mathcal{S}_{\sigma,\mu}$-globally hypoelliptic.
\end{proof}

\section{Global solvability}

In this section we prove Theorem \ref{GS-thm}. As for the proof of Theorem \ref{GH-thm} the assertion of the theorem will follow combining two propositions, one addressing sufficiency and the other necessity. 

\begin{proposition}\label{GS-implies-DC}
	If $\LL$ is $\mathcal{S}_{\sigma,\mu}$-globally solvable, then  for all $\varepsilon>0$, there exists $C_\varepsilon>0$ such that
	\begin{equation}\label{Dioph-cond-GS-implies-DC}
		\|\sigma_{\mathbb{L}}(\tau,j)\| \geq C_\varepsilon \exp\left[-\varepsilon(\|\tau\|^{1/\sigma}+j^{1/(2n\mu)})\right]
	\end{equation}
	for all $(\tau,j)\in\Z^m\times\N$ whenever $\|\sigma_{\mathbb{L}}(\tau,j)\|\neq 0.$ 
\end{proposition}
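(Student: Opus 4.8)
I would argue by contradiction, along the same lines as the necessity direction of Theorem~\ref{GH-thm} (Proposition~\ref{Necess2-GH}), but with one essential modification: in the hypoellipticity argument the datum $F$ is chosen so that a solution $u$ of $\LL u=F$ is forced to have Fourier coefficients of fixed polynomial size in $(\tau_k,j_k)$, which suffices to push $u$ out of $\mathcal{S}_{\sigma,\mu}$ but \emph{not} out of $\mathcal{S}_{\sigma,\mu}'$, since polynomial growth is harmless for ultradistributions. To detect a failure of solvability one must instead insert a sub-exponential weight calibrated between the two Fourier-side characterizations of Theorems~\ref{charac_full_fourier-functions} and~\ref{charac_full_fourier_ultradistr}. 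So, suppose the conclusion fails: there is $\varepsilon>0$ such that for every $C>0$ some $(\tau,j)$ satisfies $\sigma_{\mathbb{L}}(\tau,j)\neq 0$ and $\|\sigma_{\mathbb{L}}(\tau,j)\|<C\exp[-\varepsilon(\|\tau\|^{1/\sigma}+j^{1/(2n\mu)})]$. Taking $C=1/k$ and observing that these points cannot all lie in a finite set — on a finite set of lattice points with $\sigma_{\mathbb{L}}\neq 0$ the quantity $\|\sigma_{\mathbb{L}}\|$ has a positive lower bound, while $C\exp[-\varepsilon(\cdots)]\to 0$ as $C\to 0$ — one extracts a sequence of distinct points $(\tau_k,j_k)$ with $\|\tau_k\|+j_k\to\infty$ and
$$0<\|\sigma_{\mathbb{L}}(\tau_k,j_k)\|<\exp\!\left[-\varepsilon\big(\|\tau_k\|^{1/\sigma}+j_k^{1/(2n\mu)}\big)\right],\qquad k\in\N.$$

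\emph{Constructing an admissible datum.} Set $w_k=\exp\!\big[\tfrac{\varepsilon}{2}\big(\|\tau_k\|^{1/\sigma}+j_k^{1/(2n\mu)}\big)\big]$ and define, for $r=1,\dots,\ell$,
$$\widehat{f_{r,j}}(\tau)=\begin{cases}\sigma_{L_r}(\tau_k,j_k)\,w_k,&(\tau,j)=(\tau_k,j_k)\ \text{for some }k,\\ 0,&\text{otherwise.}\end{cases}$$
Since $|\widehat{f_{r,j}}(\tau_k)|\le\|\sigma_{\mathbb{L}}(\tau_k,j_k)\|\,w_k<\exp[-\tfrac{\varepsilon}{2}(\|\tau_k\|^{1/\sigma}+j_k^{1/(2n\mu)})]$, Theorem~\ref{charac_full_fourier-functions} gives $f_r\in\mathcal{S}_{\sigma,\mu}$ for every $r$. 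The compatibility conditions \eqref{comp-cond-system} and \eqref{comp-cond-op} hold trivially off $\{(\tau_k,j_k)\}$, and at $(\tau_k,j_k)$ the discretized identity $\sigma_{L_r}\widehat{f_{s,\cdot}}=\sigma_{L_s}\widehat{f_{r,\cdot}}$ reduces on both sides to $\sigma_{L_r}(\tau_k,j_k)\sigma_{L_s}(\tau_k,j_k)w_k$, while $\sigma_{L_r}(\tau_k,j_k)=0$ forces $\widehat{f_{r,j_k}}(\tau_k)=0$; hence $F=(f_1,\dots,f_\ell)\in\mathscr{E}_{\LL}$.

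\emph{Reaching a contradiction.} By $\mathcal{S}_{\sigma,\mu}$-global solvability there is $u\in\mathcal{S}_{\sigma,\mu}'$ with $\LL u=F$, equivalently $\sigma_{L_r}(\tau,j)\widehat{u_j}(\tau)=\widehat{f_{r,j}}(\tau)$ for all $r,\tau,j$. At each $(\tau_k,j_k)$ choose $r^\ast$ with $|\sigma_{L_{r^\ast}}(\tau_k,j_k)|=\|\sigma_{\mathbb{L}}(\tau_k,j_k)\|\neq 0$; dividing gives $\widehat{u_{j_k}}(\tau_k)=w_k$. Then, with $\varepsilon'=\varepsilon/4$,
$$|\widehat{u_{j_k}}(\tau_k)|\exp\!\big[-\varepsilon'\big(\|\tau_k\|^{1/\sigma}+j_k^{1/(2n\mu)}\big)\big]=\exp\!\big[\tfrac{\varepsilon}{4}\big(\|\tau_k\|^{1/\sigma}+j_k^{1/(2n\mu)}\big)\big]\longrightarrow\infty,$$
so the full Fourier coefficients of $u$ violate \eqref{seq-full-coeff-distr}, and Theorem~\ref{charac_full_fourier_ultradistr} yields $u\notin\mathcal{S}_{\sigma,\mu}'$, a contradiction.

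The one delicate point is the calibration of $w_k$: it must be large enough that the forced coefficient $\widehat{u_{j_k}}(\tau_k)=w_k$ outgrows some fixed exponential $\exp[\varepsilon'(\cdots)]$, yet small enough that, after multiplication by $\|\sigma_{\mathbb{L}}(\tau_k,j_k)\|<\exp[-\varepsilon(\cdots)]$, the coefficients of the datum still decay exponentially; the choice $w_k=\exp[\tfrac{\varepsilon}{2}(\cdots)]$ achieves both, and this is exactly why the polynomial weight used for hypoellipticity is not enough here. Everything else is routine bookkeeping with the characterizations established in Section~\ref{sec-notations}.
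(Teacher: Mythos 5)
Your proof is correct, and it follows the same overall contradiction scheme as the paper: negate the estimate, pick a lacunary datum supported on the bad frequencies $(\tau_k,j_k)$, and show that any solution has Fourier coefficients forced to grow beyond what Theorem~\ref{charac_full_fourier_ultradistr} allows. The difference is in the construction of the datum. The paper takes $C=1/k$ in the negation, sets all coefficients $\widehat{f_{r,j_k}}(\tau_k)=1$ (so the $f_r$ are only ultradistributions), and gets the contradiction from the factor $k$ alone: the forced coefficient is $\geq k\exp[\varepsilon(\cdots)]$, which already violates the ultradistributional bound at level $\varepsilon$ without any need to argue that $\|\tau_k\|+j_k\to\infty$. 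You instead weight by the symbol, $\widehat{f_{r,j_k}}(\tau_k)=\sigma_{L_r}(\tau_k,j_k)\exp[\tfrac{\varepsilon}{2}(\cdots)]$, which has two advantages: the compatibility conditions \eqref{comp-cond-system} and \eqref{comp-cond-op} become automatic (for the paper's constant-one choice they are not immediate, since $\sigma_{L_r}\widehat{f_{s,j}}=\sigma_{L_s}\widehat{f_{r,j}}$ would require the symbols to agree at $(\tau_k,j_k)$, and an individual $\sigma_{L_r}(\tau_k,j_k)$ may vanish even though $\|\sigma_{\LL}(\tau_k,j_k)\|\neq 0$), and your data actually lie in $\mathcal{S}_{\sigma,\mu}$, so you prove the slightly stronger statement that solvability fails even for smooth admissible right-hand sides — essentially the exponential-weight analogue of the paper's hypoellipticity-necessity construction, as you observe. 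The price is the extra (easy) extraction step showing the points are distinct and escape to infinity, which your argument uses to make $\exp[\tfrac{\varepsilon}{4}(\cdots)]$ diverge; that step is handled correctly, so the proof stands.
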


\begin{proof} 
	Suppose that \eqref{Dioph-cond-GS-implies-DC} is not true; then there exists $\varepsilon >0$ such that for each $k\in\mathbb{N}$, there exists $(\tau_k,j_k)\in\mathbb{Z}^m\times\mathbb{N}$ satisfying
	\begin{equation*}
		0 < \|\sigma_{\mathbb{L}}(\tau_k,j_k)\| < k^{-1}\exp\left[ -\varepsilon (\|\tau_k\|^{1/\sigma} + j_k^{1/(2n\mu)}) \right].
	\end{equation*}
	
	For each $r=1,\ldots,\ell$, we define 
	$$
	\widehat{f_{r,j}}(\tau)= \begin{cases}
		1 , & \text{if } (\tau,j) = (\tau_k,j_k) \text{ for some } k\in \mathbb{N}, \\
		0, & \text{otherwise. }
	\end{cases}
	$$
	
	It is easy to verify that $F = (f_1, \ldots, f_\ell) \in \mathscr{E}_\mathbb{L}$, meaning that the functions $f_r \in \mathcal{S}_{\sigma,\mu}'$ and satisfy the compatibility conditions \eqref{comp-cond-system} and \eqref{comp-cond-op}.
	
	Now, suppose there exists $u \in \mathcal{S}_{\sigma,\mu}'$ such that $\mathbb{L} u=F$. Its Fourier coefficients must satisfy
	$$
	\sigma_{L_r}(\tau,j) \widehat{u_j}(\tau) = \widehat{f_{r,j}}(\tau), \quad \text{for all } (\tau,j)\in\mathbb{Z}^m \times \mathbb{N}.
	$$	
	Then, for any $k\in \mathbb{N}$, we have
	\begin{align*}
		|\widehat{u_j}(\tau)| & = |\sigma_{L_r}(\tau,j)|^{-1} |\widehat{f_{r,j}}(\tau)| \geq  \|\sigma_{\mathbb{L}}(\tau,j)\|^{-1} |\widehat{f_{r,j}}(\tau)| \\
		& \geq k \exp\left[\varepsilon (\|\tau_k\|^{1/\sigma} + j_k^{1/(2n\mu)}) \right], 
	\end{align*}
	which implies that $u \notin \mathcal{S}_{\sigma,\mu}'$, leading to a contradiction. Therefore, $\mathbb{L}$ is not $\mathcal{S}_{\sigma,\mu}$-globally solvable.
\end{proof}

\begin{proposition}\label{DC-implies-GS}
	Assume that for all $\varepsilon>0$, there exists $C_\varepsilon>0$ such that 
	\begin{equation}\label{Dioph-cond-DC-implies-GS}
		\|\sigma_{\mathbb{L}}(\tau,j)\| \geq C_\varepsilon \exp\left[-\varepsilon(\|\tau\|^{1/\sigma}+j^{1/(2n\mu)})\right]
	\end{equation}
	for all $(\tau,j)\in\Z^m\times\N$ whenever $\|\sigma_{\mathbb{L}}(\tau,j)\|\neq 0.$ Then $\LL$ is $\mathcal{S}_{\sigma,\mu}$-globally solvable.
\end{proposition}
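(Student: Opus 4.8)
The plan is to prove the only nontrivial inclusion $\mathscr{E}_\LL \subseteq \LL(\mathcal{S}_{\sigma,\mu}')$, since $\LL(\mathcal{S}_{\sigma,\mu}') \subseteq \mathscr{E}_\LL$ is immediate from the necessity of the compatibility conditions \eqref{comp-cond-system} and \eqref{comp-cond-op} discussed above. So I fix $F = (f_1,\ldots,f_\ell) \in \mathscr{E}_\LL$ and build a solution directly through its full Fourier coefficients. For each $(\tau,j) \in \Z^m\times\N$ with $\sigma_\LL(\tau,j) \neq 0$, let $r^\ast = r^\ast(\tau,j) \in \{1,\ldots,\ell\}$ be an index realizing $|\sigma_{L_{r^\ast}}(\tau,j)| = \|\sigma_\LL(\tau,j)\|$, and set
\[
\widehat{u_j}(\tau) = \begin{cases} \sigma_{L_{r^\ast}}(\tau,j)^{-1}\,\widehat{f_{r^\ast,j}}(\tau), & (\tau,j)\notin\mathcal{N},\\[1mm] 0, & (\tau,j)\in\mathcal{N}. \end{cases}
\]
This is consistent because on $\mathcal{N}$ condition \eqref{comp-cond-op} forces $\widehat{f_{r,j}}(\tau) = 0$ for every $r$.

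The first step is to check that $u := \sum_{j \in \N}\sum_{\tau\in\Z^m} \widehat{u_j}(\tau)\, e^{i\tau\cdot t}\varphi_j(x)$ defines an element of $\mathcal{S}_{\sigma,\mu}'$, for which, by Theorem \ref{charac_full_fourier_ultradistr}, it suffices to prove that $|\widehat{u_j}(\tau)|$ grows subexponentially in $(\tau,j)$. Each $f_r \in \mathcal{S}_{\sigma,\mu}'$, so by the same theorem, for every $\varepsilon>0$ there is $C_\varepsilon^{(r)}>0$ with $|\widehat{f_{r,j}}(\tau)| \le C_\varepsilon^{(r)}\exp[\varepsilon(\|\tau\|^{1/\sigma}+j^{1/(2n\mu)})]$; since there are only finitely many $r$, the constant $\widetilde C_\varepsilon := \max_{1\le r\le\ell} C_\varepsilon^{(r)}$ yields a bound uniform in $r$, in particular valid for the $(\tau,j)$-dependent index $r^\ast$. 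Combining this with the hypothesis \eqref{Dioph-cond-DC-implies-GS}, for $(\tau,j)\notin\mathcal{N}$ I get
\[
|\widehat{u_j}(\tau)| = \|\sigma_\LL(\tau,j)\|^{-1}\,|\widehat{f_{r^\ast,j}}(\tau)| \le C_\varepsilon^{-1}\widetilde C_\varepsilon\,\exp\!\big[2\varepsilon(\|\tau\|^{1/\sigma}+j^{1/(2n\mu)})\big],
\]
and since $\varepsilon>0$ is arbitrary this is exactly the estimate \eqref{seq-full-coeff-distr}, so $u\in\mathcal{S}_{\sigma,\mu}'$.

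The second step is to verify $\LL u = F$. Because $L_r(e^{i\tau\cdot t}\varphi_j(x)) = \sigma_{L_r}(\tau,j)\,e^{i\tau\cdot t}\varphi_j(x)$ (from $Q_r(D_t)e^{i\tau\cdot t} = (\sum_{|\alpha|\le k_r} c_{\alpha,r}\tau^\alpha)e^{i\tau\cdot t}$ and $P\varphi_j = \lambda_j\varphi_j$) and $L_r$ acts continuously on $\mathcal{S}_{\sigma,\mu}'$, the identity $L_r u = f_r$ is equivalent to $\sigma_{L_r}(\tau,j)\widehat{u_j}(\tau) = \widehat{f_{r,j}}(\tau)$ for all $(\tau,j)$ and all $r$. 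On $\mathcal{N}$ both sides vanish. Off $\mathcal{N}$, I invoke \eqref{comp-cond-system}, which at the Fourier level reads $\sigma_{L_r}(\tau,j)\widehat{f_{s,j}}(\tau) = \sigma_{L_s}(\tau,j)\widehat{f_{r,j}}(\tau)$; taking $s = r^\ast$ and dividing by $\sigma_{L_{r^\ast}}(\tau,j)\ne 0$ gives precisely $\sigma_{L_r}(\tau,j)\widehat{u_j}(\tau) = \widehat{f_{r,j}}(\tau)$. Hence $u$ is a (strong) solution of $\LL u = F$, and $\LL$ is $\mathcal{S}_{\sigma,\mu}$-globally solvable.

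The construction is essentially forced and the estimates are routine once the full-Fourier-coefficient characterization of $\mathcal{S}_{\sigma,\mu}'$ (Theorem \ref{charac_full_fourier_ultradistr}) is available; I do not anticipate a genuine obstacle. The two points that must be handled with a little care are the uniformity of the growth constants over the finitely many indices $r$, so that the pointwise choice $r^\ast(\tau,j)$ causes no loss, and the use of the compatibility condition \eqref{comp-cond-system} to reconcile all the equations $L_r u = f_r$ simultaneously starting from the single coefficient definition built via $r^\ast$. Note that, in contrast with the hypoellipticity argument, this proof never requires $\mathcal{N}$ to be finite.
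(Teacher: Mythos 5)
Your construction is exactly the paper's: define $\widehat{u_j}(\tau)=\sigma_{L_{r^\ast}}(\tau,j)^{-1}\widehat{f_{r^\ast,j}}(\tau)$ off the zero set via the maximizing index, verify membership in $\mathcal{S}_{\sigma,\mu}'$ by combining the Diophantine lower bound with the ultradistributional growth of the data, and use the compatibility condition \eqref{comp-cond-system} at the Fourier level to get $L_s u=f_s$ for every $s$. The proof is correct; your explicit uniformization of the constants over the finitely many indices $r$ (taking the maximum of the $C_\varepsilon^{(r)}$) is a small point of care that the paper leaves implicit, but the argument is otherwise the same.
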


\begin{proof}
	Let $r^* = r(\tau,j)\in\{1,\ldots,\ell\}$ be the index such that $$|\sigma_{L_{r^*}}(\tau,j)| = \|\sigma_{\mathbb{L}}(\tau,j)\| = \max_{1\leq r\leq \ell} |\sigma_{L_r}(\tau,j)|.$$ 
	
	Now, given any $F=(f_1,\ldots,f_\ell) \in \mathscr{E}_\LL$, define
	\begin{equation}\label{solution_function_solv}
		\widehat{u_j}(\tau) = \left\{
		\begin{array}{ll}
			0, & \mbox{if } \sigma_{L_{r^*}}(\tau,j)=0, \\
			\sigma_{L_{r^*}}(\tau,j)^{-1} \, \widehat{f_{r^*,j}}(\tau), & \mbox{otherwise. }
		\end{array}
		\right.
	\end{equation}
	
	Clearly, $L_{r^*}u=f_{r^*}$. Moreover, since $F\in \mathscr{E}_\LL$, it follows that $L_{s} f_{r^*} = L_{r^*}f_s$, implying $\sigma_{L_{s}}(\tau,j) \widehat{f_{r^*,j}}(\tau) = \sigma_{L_{r^*}}(\tau,j) \widehat{f_{s,j}}(\tau)$. Thus, we have
	\begin{align*}
		\sigma_{L_{s}}(\tau,j)\widehat{u_j}(\tau) & =  \sigma_{L_{s}}(\tau,j)\sigma_{L_{r^*}}(\tau,j)^{-1} \widehat{f_{r^*,j}}(\tau) \\
		& =  \sigma_{L_{r^*}}(\tau,j)^{-1} \sigma_{L_{r^*}}(\tau,j)\widehat{f_{s,j}}(\tau)\\ 
		&= \widehat{f_{s,j}}(\tau),
	\end{align*}		
	for all $1\leq s\leq \ell$ and $(\tau,j)\in\Z^m\times \N$. This gives that $\LL u=F$.	
	Therefore, in order to prove that $\widehat{u_j}(\tau)$ is well-defined and satisfies $L_s u = f_s$, for all $1\leq s\leq \ell$, we only need to show that $\{\widehat{u_j}(\tau)\}$ corresponds to an ultradistribution $u \in \mathcal{S}_{\sigma,\mu}'$.
	
Let $\varepsilon > 0$ be fixed. Since $f_{r^*} \in \mathcal{S}_{\sigma,\mu}'$, there exists a positive constant $C_{\varepsilon,r^*}$ such that
$$
|\widehat{f_{{r^*},j}}(\tau)| \leq C_{\varepsilon,r^*} \exp\left[\frac{\varepsilon}{2} (\|\tau\|^{1/\sigma}+j^{1/(2n\mu)})\right].
$$

By condition \eqref{Dioph-cond-DC-implies-GS}, there exists $C_{\varepsilon} > 0$ such that
$$
\|\sigma_{\mathbb{L}}(\tau,j)\| \geq C_{\varepsilon} \exp\left[-\frac{\varepsilon}{2} (\|\tau\|^{1/\sigma}+j^{1/(2n\mu)})\right].
$$

Hence, we have $ |\widehat{f}_{r^*,j}(\tau)| = |\sigma_{L_{r^*}}(\tau,j)| |\widehat{u_j}(\tau)| = \|\sigma_{\mathbb{L}}(\tau,j)\| |\widehat{u_j}(\tau)|$ and
$$
|\widehat{u_j}(\tau)| = |\widehat{f}_{r^*,j}(\tau)| \|\sigma_{\mathbb{L}}(\tau, j)\|^{-1}  \leq C_{\varepsilon,r^*}C_{\varepsilon}^{-1}\exp\left[\varepsilon (\|\tau\|^{1/\sigma}+j^{1/(2n\mu)})\right].
$$
for all $(\tau,j) \in \mathbb{Z}^n \times \N$, and therefore $u \in \mathcal{S}_{\sigma,\mu}'$, which concludes the proof.
\end{proof}

A more detailed analysis of the last proof shows that it is possible to obtain a better control on the Fourier coefficients of the ultradistribution $u$ in the case where the functions $f_r\in \mathcal{S}_{\sigma,\mu}, \ 1\leq r\leq\ell$. More precisely, we have the following result.

\begin{theorem}
	If $\LL$ is $\mathcal{S}_{\sigma,\mu}$-globally solvable and $F=(f_1,\ldots,f_\ell)\in \mathscr{E}_\LL$ is such that $f_r\in\mathcal{S}_{\sigma,\mu},$ for all $r=1,\ldots,\ell$, then there exists $u\in\mathcal{S}_{\sigma,\mu}$ solution of \ $\LL u=F.$
\end{theorem}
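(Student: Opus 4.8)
The plan is to repeat the construction of the solution carried out in the proof of Proposition~\ref{DC-implies-GS}, but to upgrade the estimates on the Fourier coefficients from ultradistributional decay to the genuine exponential decay that characterizes $\mathcal{S}_{\sigma,\mu}$ via Theorem~\ref{charac_full_fourier-functions}. Concretely, since $\LL$ is $\mathcal{S}_{\sigma,\mu}$-globally solvable, Proposition~\ref{GS-implies-DC} (together with Theorem~\ref{GS-thm}) gives that for every $\varepsilon>0$ there is $C_\varepsilon>0$ with $\|\sigma_{\mathbb{L}}(\tau,j)\| \geq C_\varepsilon \exp[-\varepsilon(\|\tau\|^{1/\sigma}+j^{1/(2n\mu)})]$ whenever $\sigma_{\mathbb{L}}(\tau,j)\neq 0$. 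As in the earlier proof, let $r^*=r(\tau,j)$ be the index realizing the maximum, and define $\widehat{u_j}(\tau)$ by \eqref{solution_function_solv}. The compatibility conditions \eqref{comp-cond-system}, \eqref{comp-cond-op} encoded in $F\in\mathscr{E}_\LL$ guarantee, exactly as before, that $\sigma_{L_s}(\tau,j)\widehat{u_j}(\tau)=\widehat{f_{s,j}}(\tau)$ for all $s$, so that formally $\LL u = F$; it only remains to show that the sequence $\{\widehat{u_j}(\tau)\}$ satisfies \eqref{seq-full-coeff-funct}, i.e. decays like $C\exp[-\delta(\|\tau\|^{1/\sigma}+j^{1/(2n\mu)})]$ for some $\delta>0$, which by Theorem~\ref{charac_full_fourier-functions} means $u\in\mathcal{S}_{\sigma,\mu}$.

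The key step is the estimate. Since now $f_{r^*}\in\mathcal{S}_{\sigma,\mu}$, Theorem~\ref{charac_full_fourier-functions} gives fixed constants $C_{r^*}>0$ and $\varepsilon_{r^*}>0$ (independent of $\varepsilon$) with $|\widehat{f_{r^*,j}}(\tau)|\leq C_{r^*}\exp[-\varepsilon_{r^*}(\|\tau\|^{1/\sigma}+j^{1/(2n\mu)})]$. The subtlety is that $r^*$ varies with $(\tau,j)$, so one should first pass to uniform constants $C_0=\max_{1\leq r\leq\ell}C_r$ and $\varepsilon_0=\min_{1\leq r\leq\ell}\varepsilon_r>0$ over the finitely many indices, obtaining $|\widehat{f_{r^*,j}}(\tau)|\leq C_0\exp[-\varepsilon_0(\|\tau\|^{1/\sigma}+j^{1/(2n\mu)})]$ for every $(\tau,j)$. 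Then apply the lower bound on $\|\sigma_{\mathbb{L}}\|$ with the choice $\varepsilon=\varepsilon_0/2$: for $(\tau,j)$ with $\sigma_{L_{r^*}}(\tau,j)\neq 0$,
\[
|\widehat{u_j}(\tau)| = |\widehat{f_{r^*,j}}(\tau)|\,\|\sigma_{\mathbb{L}}(\tau,j)\|^{-1} \leq C_0 C_{\varepsilon_0/2}^{-1}\exp\left[-\tfrac{\varepsilon_0}{2}(\|\tau\|^{1/\sigma}+j^{1/(2n\mu)})\right],
\]
while $\widehat{u_j}(\tau)=0$ when $\sigma_{L_{r^*}}(\tau,j)=0$, so the bound holds for all $(\tau,j)\in\Z^m\times\N$. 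By Theorem~\ref{charac_full_fourier-functions}, the series $\sum_{j}\sum_{\tau}\widehat{u_j}(\tau)e^{it\cdot\tau}\varphi_j(x)$ converges in $\mathcal{S}_{\sigma,\mu}$ to a function $u$, and by the discretized identities already established $\LL u = F$.

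I expect the main (and essentially only) obstacle to be the bookkeeping with the index $r^*$ depending on $(\tau,j)$: one must be careful that the constants entering the final estimate do not secretly depend on the point, which is handled precisely by taking the max/min over the finite set $\{1,\dots,\ell\}$ as above, using that each $f_r$ is a fixed element of $\mathcal{S}_{\sigma,\mu}$. Everything else — verification of $\LL u=F$ via the compatibility conditions, and the convergence of the series — is identical to Proposition~\ref{DC-implies-GS} and Theorem~\ref{charac_full_fourier-functions} and requires no new ideas. One could equivalently phrase the argument by noting that $u$ defined in \eqref{solution_function_solv} is the same ultradistribution produced in the proof of Proposition~\ref{DC-implies-GS}, and merely re-examining its Fourier coefficients under the stronger hypothesis $f_r\in\mathcal{S}_{\sigma,\mu}$; this makes the proof a genuine refinement of the preceding one rather than a new construction.
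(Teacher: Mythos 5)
Your proposal is correct and follows essentially the same route as the paper: define $u$ by \eqref{solution_function_solv}, use the lower bound on $\|\sigma_{\mathbb{L}}\|$ coming from global solvability with a small enough $\varepsilon$ relative to the exponential decay rate of the data, and conclude $u\in\mathcal{S}_{\sigma,\mu}$ via the Fourier characterization, with $\LL u=F$ verified exactly as in Proposition~\ref{DC-implies-GS}. Your explicit max/min over the finite index set to make the constants uniform in $r^*=r(\tau,j)$ is a point the paper treats implicitly (it takes a single pair $C,\varepsilon$ valid for all $r$ from the outset), so there is no substantive difference.
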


\begin{proof}
	Let $F=(f_1,\ldots,f_\ell)\in \mathscr{E}_\LL$ such that $f_r\in\mathcal{S}_{\sigma,\mu}$ for all $r=1,\ldots,\ell$.  By the definition of the spaces $\mathcal{S}_{\sigma,\mu}$, there are positive constants $\varepsilon$ and $C$ such that
	\begin{equation*}
		|\widehat{f_{r,j}}(\tau)| \leq C \exp\left[-\varepsilon(\|\tau\|^{1/\sigma}+j^{1/(2n\mu)}) \right].
	\end{equation*}
	for all $r=1,\ldots,\ell$, and $(\tau,j)\in\Z^m\times\N$. 

	Let $u$ be defined as in \eqref{solution_function_solv}, that is,
	\begin{equation*}
		\widehat{u_j}(\tau) = \left\{
		\begin{array}{ll}
			0, & \mbox{if } \sigma_{L_{r^*}}(\tau,j)=0, \\
			\sigma_{L_{r^*}}(\tau,j)^{-1} \, \widehat{f_{r^*,j}}(\tau), & \mbox{otherwise. }
		\end{array}
		\right.
	\end{equation*}
	where $r^* = r(\tau,j)\in\{1,\ldots,\ell\}$ is the index such that $|\sigma_{L_{r^*}}(\tau,j)| = \|\sigma_{\mathbb{L}}(\tau,j)\|.$ We already proved that $\LL u=F$. To conclude the proof we just need to show that $u \in \mathcal{S}_{\sigma,\mu}.$

	Since $\LL$ is $\mathcal{S}_{\sigma, \mu}$-globally solvable, \eqref{Dioph-cond-GS-implies-DC} holds. Therefore, choosing $\varepsilon>\varepsilon_{r^*}$, there exists $C_\varepsilon>0$ such that
	\begin{align*}
		|\widehat{u_j}(\tau)| & = |\sigma_{L_{r^*}}(\tau,j)|^{-1} |\widehat{f_{r^*,j}}(\tau)| \\
		& = \|\sigma_{\LL}(\tau,j)\|^{-1} |\widehat{f_{r,j}}(\tau)| \\
		& \leq C_\varepsilon^{-1} \exp\left[\varepsilon(\|\tau\|^{1/\sigma} + j^{1/(2n\mu)}) \right] |\widehat{f_{r,j}}(\tau)|,
	\end{align*}
	for all $(\tau,j)\in \Z^m\times\N$.
	
	This leads to:
	\begin{align*}
		|\widehat{u_j}(\tau)| & \leq C_\varepsilon^{-1} C_0 \exp\left[-(\varepsilon_0-\varepsilon)(\|\tau\|^{1/\sigma}+j^{1/(2n\mu)}) \right] 
	\end{align*}
	thus $u\in\mathcal{S}_{\sigma,\mu}.$
\end{proof}

\begin{remark}
	The last result says that we can obtain a strong solution for $\LL u = F$ in the case where $\LL $ is globally solvable and $F$ is a vector-valued admissible function. However, it is important to note that this does not imply that $\LL$ is globally hypoelliptic.
\end{remark}

\begin{remark}
	We note that when $m=\ell =1$ and $Q_1(D_t) = D_t$, the system \eqref{general-const-system} is reduced to a single operator given by
	$$
	\mathcal{L} = D_t+ d P(x,D_x), 
	$$
	where $d\in\C$ and the symbol is given by
	$$
	\sigma_{\mathcal{L}}(\tau,j) = \tau + d\lambda_j, \quad (\tau,j) \in \Z\times \N.
	$$
	
	This particular case is included in the class of operators studied in \cite{AviCap22,AviCap24}. 
	Notably, the condition
	$$
	|\tau + d\lambda_j| \geq C_\varepsilon \exp\left[-\varepsilon(\|\tau\|^{1/\sigma}+j^{1/(2n\mu)})\right], \quad (\tau,j)\in\Z\times\N, 
	$$
	is equivalent to 
	$$
	\inf_{\tau \in \Z}|\tau + d\lambda_j| \geq C_\varepsilon
	\exp\left(-\varepsilon j^{1/(2n\mu)}\right), \quad j\to \infty.
	$$ 
	
	Therefore, our results recover the case of a single operator with constant coefficients considered in these references.
\end{remark}

\begin{remark}
	Since the conditions determining the global hypoellipticity and global solvability properties of the time-independent system of type \eqref{general-const-system} rely primarily on the behavior of the the symbol $\sigma_{\LL}$ at infinity, our results can be easily extended to systems of periodic pseudo-differential operators. Specifically, the differential operators $Q_r(D_t), 1\leq r\leq \ell,$ treated in Theorems \ref{GH-thm} and \ref{GS-thm} can be substituted with periodic pseudo-differential operators defined on the torus $\TT^m$, namely, those operators formally defined by 
	\begin{equation*}
		a_r(D_t) u(t) =  \sum_{\eta \in \mathbb{Z}^m}{e^{i t \cdot \eta} a_r(\eta) \widehat{u}(\eta)},
	\end{equation*}
	with symbol $a(\eta) = \{a(\eta)\}_{\eta \in \mathbb{Z}^m}$ satisfying some moderate growth like
	\begin{equation*}
		|a_r(\eta)| \leq C |\eta|^{\nu}, \ \forall \eta \in \mathbb{Z}^{m}.
	\end{equation*}
\end{remark}

\section{Examples}

To illustrate our results, we present examples of systems in this section. We begin by introducing the notion of exponential Liouville vectors.

\begin{definition}
	A vector $\alpha \in \mathbb{R}^m\setminus\mathbb{Q}^m$ is said to be an exponential Liouville vector of order $\sigma \geq 1$ if there exists $\epsilon>0$ such that the inequality 
	$$
	\|\tau - \ell \alpha\| \leq \exp\left(-\epsilon |\ell|^{1/\sigma}\right)
	$$
	has infinitely many solutions $(\tau, \ell) \in \mathbb{Z}^m\times \mathbb{Z}$.
\end{definition}

It follows that $\alpha \in \mathbb{R}^m\setminus\mathbb{Q}^m$ is not an exponential Liouville vector of order $\sigma \geq 1$ if, for every $\epsilon>0$, there exists $C_{\epsilon}>0$ such that 
$$
\|\tau - \ell \alpha\| \geq \exp\left(-\epsilon |\ell|^{1/\sigma}\right), 
$$
for all $(\tau, \ell) \in \mathbb{Z}^m\times \mathbb{Z}$.

It is worth noting that when $m=1$, this notion aligns with the concept of Liouville number. An example of a non-exponential Liouville number of order $\sigma\geq 1$ is the continued fraction $\omega =  [10^{1!}, 10^{2!}, 10^{3!}, \dots]$ (see \cite{AriKirMed19}).

Furthermore, if a single coordinate $\alpha_i$ of a vector $\alpha \in \mathbb{R}^m\setminus\mathbb{Q}^m$ is not an exponential Liouville number, then the entire vector shares the same property. However, the converse is not true. For example, in Example 4.9 of \cite{Ber99} (also see Example 2.7 in \cite{BerMedZan21}), a pair of exponential Liouville numbers $\rho$ and $\theta$ is constructed such that the vector $(\rho, \theta)$ is not an exponential Liouville vector of order $\sigma\geq1$.

With this discussion in mind, taking a non-exponential Liouville vector  $(\rho, \theta)$ whose coordinates are Lioville numbers, consider the following systems:
$$
\mathbb{L} \doteq    
\begin{cases}
	L_1 = D_{t_1} + i\rho  H  \\
	L_2 = D_{t_2} + \theta H    
\end{cases}
\quad \text{and} \quad
\mathbb{M} \doteq    
\begin{cases}
	M_1 = D_{t_1} + \rho H  \\
	M_2 = D_{t_2} + \theta H
\end{cases}
$$
defined on $\mathbb{T}^2 \times \mathbb{R}$, where $H$ represents the harmonic oscillator given by
\begin{equation*}
	H = -\frac{d^2}{dx^2} + x^2, \quad x \in \mathbb{R},
\end{equation*}    
with eigenvalues $\lambda_j = 2j + 1$, $j \in \mathbb{N}$. 

Since $L_1$ is globally hypoelliptic, then the same is true for the system $\mathbb{L}$ (and also globally solvable). Note that the operator
$L_2$ is neither globally hypoelliptic nor globally solvable.

On the other hand, the system $\mathbb{M}$ is globally hypoelliptic, and therefore globally solvable, although operators $M_1$ and $M_2$ do not have these properties.

\section{A class of time dependent coefficients systems} \label{sec_reduction}

In this section, we introduce a class of operators which depend on time and whose global hypoellipticity and solvability can be characterized by their normal form. This normal form is a time-independent system uniquely associated with the original system through conjugation.

We begin by considering the system of operators $\LL_a = (L_1, L_2,\ldots,L_m)$ defined on $\mathbb{T}^m \times \mathbb{R}^n$, given by
\begin{equation}\label{L_a}
	L_r = D_{r} + a_r(t_r)P(x,D_x), \quad r=1, \ldots, m,
\end{equation}
where each $a_{r}\in \mathcal{G}^{\sigma}(\mathbb{T}^1; \mathbb{R})$ is a real-valued function, the derivatives $D_r=-i\partial/\partial t_r$ are defined in different copies of the one-dimensional torus, and $P = P(x,D_x)$ satisfies the conditions \eqref{P-intro} and \eqref{P-elliptic}.

Additionally, we define the normal form of $\LL_a$ as the system $\LL_{a_0} = (L_{1,0}, L_{2,0},\ldots, L_{m,0})$ in $\mathbb{T}^m \times \mathbb{R}^n$, given by
\begin{equation}\label{L_0}
	L_{r,0} = D_r + a_{r,0}P(x,D_x), \quad r=1, \ldots, m,
\end{equation}
where
$$
a_{r,0} = (2 \pi)^{-1} \int_{0}^{2\pi} a_r(\eta)d\eta.
$$
This technique of obtaining global properties of $\LL_a$ by analyzing the corresponding properties of the normal form $\LL_{a_0}$ is widely employed for systems and operators defined on the torus, as seen in \cite{AriKirMed19, BerMedZan21, Pet05}. 
The idea is to find a linear automorphism $\Psi$ of the spaces $\mathcal{S}_{\sigma, \mu}$ and $\mathcal{S}'_{\sigma, \mu}$ which works as a bijection between the spaces of admissible distributions of the operators $\LL_a$ and $\LL_{a_0}$ and such that
\begin{equation}\label{conjugationformula}
	\Psi^{-1} \circ L_r\circ \Psi =  L_{r,0}, \quad 1\leq r \leq m.
\end{equation}
Indeed, assuming the existence of such an automorphism, if ${\LL}_{a_0}$ is $\mathcal{S}_{\sigma,\mu}$-globally hypoelliptic and $u \in \mathcal{S}_{\sigma,\mu}'$ satisfy $L_{r} u = f_r \in \mathcal{S}_{\sigma,\mu}$ for each $r=1, \ldots, m$. We may then write 
$$
f_r = \Psi g_{r}, \text{ with } g_r \in \mathcal{S}_{\sigma,\mu}, \text{ for all } r=1, \ldots, m.
$$
Then, by \eqref{conjugationformula}, we have
$$
L_{r} u = \Psi g_{r} \Longrightarrow L_{r,0}(\Psi^{-1} u) = g_r, \quad r=1, \ldots, m.
$$

Since $\Psi^{-1} u \in \mathcal{S}_{\sigma,\mu}'$, it follows from the hypothesis on ${\LL}_{a_0}$ that $\Psi^{-1} u \in \mathcal{S}_{\sigma,\mu}$, implying $u \in \mathcal{S}_{\sigma,\mu}$. Therefore, ${\LL}_{a}$ is $\mathcal{S}_{\sigma,\mu}$-globally hypoelliptic. Similarly, the $\mathcal{S}_{\sigma,\mu}$-global hypoellipticity of ${\LL}_{a}$ implies the same property for ${\LL}_{a_0}$, using a similar argument.

Now, suppose that $\mathbb{L}_{a_0}$ is $\mathcal{S}_{\sigma,\mu}$-globally solvable, and let 
$$
F = (f_1, \ldots, f_m) \in \mathscr{E}_{{\LL}_{a}}.
$$
Since $f_r =  \Psi g_r$ for some $G = (g_1, \ldots, g_m) \in \mathscr{E}_{{\LL}_{0}}$, there exists
$u \in \mathcal{S}_{\sigma,\mu}'$ such that $L_{r,0}u = g_r$. Then, 
according to the conjugation formula, 
$$
f_r = \Psi(L_{r,0} u ) = L_{r} (\Psi u),  \quad r=1, \ldots, m.
$$
Thus, $v = \Psi u  \in \mathcal{S}_{\sigma,\mu}'$ is a solution of
$L_{r} v = f_r$, implying the $\mathcal{S}_{\sigma,\mu}$-global solvability of $\mathbb{L}_{a}$. The converse follows a similar argument.
  For the choice of $\Psi$ we shall take ispiration from a similar result proved in \cite{AviCap24}. Unluckily, the map we have in mind maps isomorphically $\mathcal{S}_{\sigma,\mu}$ into $\mathcal{S}_{\tilde{\sigma}, \mu}$ where $\tilde{\sigma} = \max \{\sigma, \mu M-1\},$ so it yields a loss of regularity in the time variables with an interplay between the regularity in $t$ and the one in $x$. This is not surprising because such a phenomenon had been already observed in \cite{AviCap22} for the case of time-depending coefficients with the same loss. This obstacle cannot be removed even by replacing $\mathcal{S}_{\sigma,\mu}$ and $\mathcal{S}'_{\sigma,\mu}$ by the sets
  $$\mathcal{F}_\mu = \bigcup_{\sigma \geq 1}\mathcal{S}_{\sigma,\mu}, \qquad \mathcal{U}_\mu = \bigcup_{\sigma \geq 1}\mathcal{S}'_{\sigma,\mu}$$
  in the definition of global hypoellipticity and solvability since it will be clear from the next arguments that the function $\Psi$ does not map $\mathcal{U}_\mu $ into itself.

In view of these considerations, in this section we restrict our analysis to the case
$$
M\mu - 1 \leq \sigma.
$$

Nevertheless, despite this restriction, we stress the fact that we are able to explore the noteworthy case where $\sigma = 1$, $M = 2$, and $1/2 \leq \mu < 1$, yielding the space $\mathcal{S}_{1, \mu}(\mathbb{T}^m \times \mathbb{R}^n)$ of functions that have an entire extension to $\mathbb{T}^m \times \mathbb{C}^n$, satisfying
\[
|\partial_z^\beta \partial_t^{\alpha}\tilde f(t,z) | \leq C^{|\beta|+|\alpha|+1} \alpha! \beta!^\mu \exp \left(-a |\operatorname{Re} z |^{\frac{1}{\mu}}+ b |\operatorname{Im} z|^{\frac{1}{1-\mu}}\right),
\]
where the positive constants $C$, $a$, and $b$ are independent of $\alpha$ and $\beta$. We are not aware of results on global hypoellipticity or solvability in spaces of entire functions on $\C^n$ in the literature. 

Now, based on the preceding sections and the results from \cite{AviCap24}, we define the set $\mathscr{E}_{\mathbb{L}_a}$ of admissible functions for the global solvability of the system $\mathbb{L}_a$. This set is formed by all vectors $F=(f_1, \ldots,f_m)$ with coordinates $f_r \in \mathcal{S}_{\sigma,\mu}$ that satisfy
\begin{equation}\label{commutation-a}
	L_r f_s = L_s f_r, \quad 1 \leq r,s \leq m,
\end{equation}
and
\begin{equation}\label{compatib-a}
	\int_{0}^{2\pi} \exp\left(i \lambda_j \int_{0}^{t_r}a_r(\eta)d\eta\right) f_{r,j}(t_1, \ldots, t_r, \ldots, t_{m}) dt_{r} = 0,
\end{equation}
whenever $a_{r,0} \lambda_j \in \mathbb{Z}$.

Similarly, $\mathscr{E}_{\mathbb{L}{a_0}}$ represents the set of all vectors $F=(f_1, \ldots,f_m)$, where each $f_r \in \mathcal{S}_{\sigma,\mu}$ satisfies:
\begin{equation}\label{commutation-0}
	L_{r,0} f_s = L_{s,0} f_r, \quad 1 \leq r,s \leq m,
\end{equation}
and
\begin{equation}\label{compatib-0}
	\widehat{f}_{r,j}(\tau) = 0, \quad \text{whenever} \quad
	\sigma_{L_{r,0}}(\tau,j) = \tau_r + a_{r,0}\lambda_j = 0.
\end{equation}

Finally, we recall the notions of hypoellipticity and solvability,

\begin{definition}
	Let $\LL_{a}$ and $\LL_{a_0}$ be the systems defined in \eqref{L_a} and \eqref{L_0}. We say that:
	\begin{enumerate}
		\item $\LL_a$ (resp. $\LL_{a_0}$) is $\mathcal{S}_{\sigma,\mu}$-globally solvable when  $\LL_a(\mathcal{S}_{\sigma,\mu}')=\mathscr{E}_{\LL_a}$ (resp. $\LL_{a_0}(\mathcal{S}_{\sigma,\mu}')=\mathscr{E}_{\LL_{a_0}}$); 
		\item $\LL_a$ (resp. $\LL_{a_0}$) is $\mathcal{S}_{\sigma,\mu}$-globally hypoelliptic when $u \in \mathcal{S}_{\sigma,\mu}'$  and $L_r u \in \mathcal{S}_{\sigma,\mu}$ (resp. \\ $L_{r,0} u \in \mathcal{S}_{\sigma,\mu}$), for all $1\leq r\leq m$, imply  $u \in \mathcal{S}_{\sigma,\mu}.$
	\end{enumerate}
\end{definition}

Now, we present our main result regarding the global properties of these systems.

\begin{theorem}\label{The-Normal} Let $\sigma \geq 1$ and $\mu \geq 1/2$ such that $\mu M -1 \leq \sigma$. Then
	the system $\mathbb{L}_{a}$ is $\mathcal{S}_{\sigma,\mu}$-globally hypoelliptic (resp. $\mathcal{S}_{\sigma,\mu}$-globally solvable) if and only if its normal form $\mathbb{L}_{a_0}$ is $\mathcal{S}_{\sigma,\mu}$-globally hypoelliptic (resp. $\mathcal{S}_{\sigma,\mu}$-globally solvable).
\end{theorem}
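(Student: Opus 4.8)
The plan is to construct an explicit automorphism $\Psi$ of both $\mathcal{S}_{\sigma,\mu}$ and $\mathcal{S}'_{\sigma,\mu}$ satisfying the conjugation formula \eqref{conjugationformula}, following the template already laid out in the excerpt: once such a $\Psi$ exists and is shown to map $\mathscr{E}_{\LL_a}$ bijectively onto $\mathscr{E}_{\LL_{a_0}}$, the equivalences for both global hypoellipticity and global solvability follow from the abstract argument given just before the theorem statement. So the entire content of the proof is the construction and the mapping properties of $\Psi$. Taking inspiration from \cite{AviCap24}, I would define $\Psi$ on Fourier--eigenfunction expansions: writing $u = \sum_{j} u_j(t)\varphi_j(x)$ with $u_j(t) = \sum_{\tau} \widehat{u_j}(\tau) e^{it\cdot\tau}$, set
$$
\Psi u (t,x) = \sum_{j\in\N} \left[\prod_{r=1}^{m} \exp\!\left(i\lambda_j\Big(\int_0^{t_r} a_r(\eta)\,d\eta - a_{r,0} t_r\Big)\right)\right] u_j(t)\,\varphi_j(x).
$$
The exponent is chosen so that $\partial_{t_r}$ of the multiplier cancels $(a_r(t_r)-a_{r,0})\lambda_j$; since $\int_0^{t_r}a_r - a_{r,0}t_r$ is $2\pi$-periodic in $t_r$ (by definition of $a_{r,0}$) and real-valued, the multiplier is a well-defined Gevrey-$\sigma$ function on $\TT^m$ for each fixed $j$, of modulus exactly $1$.

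The key steps, in order, are: (i) verify the conjugation identity \eqref{conjugationformula} by a direct computation with the Leibniz rule — since each $L_r = D_r + a_r(t_r)P$ and $\Psi$ acts diagonally in $j$ (on the $\varphi_j$ block $P$ becomes $\lambda_j$), the cross term produced by $D_r$ hitting the exponential multiplier is exactly $-(a_r(t_r)-a_{r,0})\lambda_j$ times that block, which converts $a_r(t_r)\lambda_j$ into $a_{r,0}\lambda_j$; (ii) show $\Psi$ maps $\mathcal{S}_{\sigma,\mu}\to\mathcal{S}_{\tilde\sigma,\mu}$ with $\tilde\sigma = \max\{\sigma,\mu M-1\}$, using the eigenfunction characterization Theorem \ref{Thm-estimate-derivatives-functions}: the delicate point is estimating $\partial_t^\gamma$ of the multiplier, where each derivative can bring down a factor $\lambda_j \sim \rho\, j^{M/2n}$ times a Gevrey-$\sigma$ factor from derivatives of $a_r$; a factor $j^{\gamma M/2n}$ against the Gaussian-type decay $\exp(-\varepsilon j^{1/2n\mu})$ costs $\exp(C|\gamma|^{\mu M}(\ldots))$, forcing the order $\mu M-1$ and hence the hypothesis $\mu M - 1\le\sigma$ which guarantees $\tilde\sigma=\sigma$; (iii) the same computation at the level of ultradistributions, via the dual characterization Theorem \ref{charac_full_fourier_ultradistr}, together with the observation (already flagged in the text) that without the restriction $\Psi$ would leave $\mathcal{U}_\mu$; (iv) invertibility: $\Psi^{-1}$ has the same form with $a_r$ replaced by $-a_r$ (equivalently the reciprocal multiplier), so the mapping estimates apply verbatim; (v) the bijection $\Psi:\mathscr{E}_{\LL_a}\to\mathscr{E}_{\LL_{a_0}}$, which amounts to checking that the commutation relations \eqref{commutation-a} transform into \eqref{commutation-0} (immediate from \eqref{conjugationformula}) and that the integral compatibility condition \eqref{compatib-a} matches \eqref{compatib-0} — here one computes $\widehat{(\Psi^{-1}f_r)_j}(\tau)$ and recognizes the integral $\int_0^{2\pi}\exp(i\lambda_j\int_0^{t_r}a_r)f_{r,j}\,dt_r$ as (up to the harmless nonvanishing factors in the other variables and the Fourier transform in them) the vanishing of the relevant Fourier coefficient precisely when $a_{r,0}\lambda_j\in\Z$, i.e. when $\tau_r + a_{r,0}\lambda_j$ can be zero.

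Having established that $\Psi$ is a topological automorphism of $\mathcal{S}_{\sigma,\mu}$ and of $\mathcal{S}'_{\sigma,\mu}$, intertwines $L_r$ with $L_{r,0}$, and restricts to a bijection between the admissibility sets, the four implications in the theorem are exactly the abstract argument reproduced in the paragraph preceding the statement (for hypoellipticity: pull $u$ and $f_r=\Psi g_r$ back through $\Psi^{-1}$ and apply the hypothesis on $\LL_{a_0}$, then push forward; for solvability: given $F\in\mathscr{E}_{\LL_a}$ write $F=\Psi G$ with $G\in\mathscr{E}_{\LL_{a_0}}$, solve $L_{r,0}u=g_r$, and set $v=\Psi u$; both converses symmetric). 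I expect the main obstacle to be step (ii)/(iii): getting the Gevrey/Gelfand--Shilov seminorm estimates for $\Psi u$ uniform in $j$, carefully tracking how the factors of $\lambda_j^{|\gamma|}\sim j^{|\gamma|M/2n}$ interact with the decay $\exp(-\varepsilon j^{1/2n\mu})$ and with the $\gamma!^\sigma$ growth coming from the Gevrey regularity of the $a_r$, so as to land exactly at order $\tilde\sigma = \max\{\sigma, \mu M - 1\}$ and no worse; the periodicity and unit modulus of the multiplier keep everything else routine.
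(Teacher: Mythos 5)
Your proposal is correct and follows essentially the same route as the paper: conjugation to the normal form by a unimodular diagonal multiplier $e^{\pm iA(t)\lambda_j}$ acting on the eigenfunction expansion, with the mapping properties on $\mathcal{S}_{\sigma,\mu}$ and $\mathcal{S}'_{\sigma,\mu}$ resting on Leibniz/Fa\`a di Bruno estimates plus the inequality $j^{\gamma M/(2n)} \leq C_\epsilon^{\gamma}(\gamma!)^{M\mu}\exp\bigl(\epsilon j^{1/(2n\mu)}\bigr)$ under the hypothesis $\mu M-1\leq\sigma$, and the bijection of admissibility sets checked on Fourier coefficients --- precisely the content of Propositions \ref{Theorem-Psi} and \ref{Theorem-Reeduc} followed by the abstract transference argument. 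One small caveat: with the sign you displayed, $\exp\bigl(+i\lambda_j(\int_0^{t_r}a_r(\eta)\,d\eta-a_{r,0}t_r)\bigr)$, the cross term adds rather than cancels, giving $\Psi^{-1}\circ L_r\circ\Psi=D_r+(2a_r(t_r)-a_{r,0})P(x,D_x)$; your map is the paper's $\Psi^{-1}$, so either take $\exp(-iA(t)\lambda_j)$ (as your verbal description of the cancellation clearly intends) or swap the roles of $\Psi$ and $\Psi^{-1}$ throughout, which changes nothing in the estimates since the multiplier has unit modulus.
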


To prove the theorem, we need to establish the existence of an automorphism that satisfies \eqref{conjugationformula}. Initially, we demonstrate the well-definedness, linearity, and invertibility of $\Psi$. 

\begin{proposition}\label{Theorem-Psi}
	For each ultradistribution $u(t,x)=\sum_{j \in \N} u_j(t) \varphi_j(x) \in \mathcal{S}_{\sigma,\mu}'$ let
	\begin{equation}\label{conjugation}
		(\Psi u)(t,x) \doteq \sum_{j \in \N} \exp\left(-i A(t) \lambda_j\right) u_j(t)\varphi_j(x),
	\end{equation}
	where
	\begin{equation}\label{A(t)definition}
	A(t) = \sum_{k=1}^{m} \int_{0}^{t_k} a_k(\eta)d\eta - a_{k,0}t_k. 
	\end{equation}
	Then, for all $\sigma \geq 1$ and $\mu \geq 1/2$ such that $M\mu - 1 \leq \sigma,$ we have that
	$$
	\Psi: \mathcal{S}_{\sigma,\mu} \to \mathcal{S}_{\sigma,\mu}
	\ \textrm{ and } \
	\Psi: \mathcal{S}_{\sigma,\mu}' \to \mathcal{S}_{\sigma,\mu}', 
	$$
	are automorphisms with inverse
	\begin{equation}\label{conjugation_inverse}
		\Psi^{-1} u(t,x) = \sum_{j \in \N} \exp \left(i A(t) \lambda_j\right) u_j(t)\varphi_j(x).
	\end{equation}	
	
\end{proposition}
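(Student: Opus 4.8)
The plan is to verify directly on eigenfunction expansions that $\Psi$ is well-defined, linear, and bijective with inverse \eqref{conjugation_inverse}, by reducing everything to estimates on the Fourier coefficients $\widehat{u_j}(\tau)$ and using the characterizations in Theorems \ref{charac_full_fourier-functions} and \ref{charac_full_fourier_ultradistr}. Linearity is immediate from formula \eqref{conjugation}, and the identities $\Psi \circ \Psi^{-1} = \Psi^{-1}\circ \Psi = \mathrm{id}$ are formal once well-definedness is established, since the exponential factors $\exp(\mp i A(t)\lambda_j)$ multiply to $1$. So the whole content is: \emph{$\Psi$ maps $\mathcal{S}_{\sigma,\mu}$ into itself and $\mathcal{S}'_{\sigma,\mu}$ into itself} (the same argument applied to $\Psi^{-1}$, which has the same structure with $a_k$ replaced by $-a_k$ in the phase, gives continuity of the inverse).

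\textbf{Key steps.} First I would record the crucial property of the phase: each $a_k \in \mathcal{G}^\sigma(\TT^1;\R)$, so its primitive $t_k \mapsto \int_0^{t_k} a_k(\eta)\,d\eta$ is in $\mathcal{G}^\sigma$ as well, the subtraction of the linear term $a_{k,0} t_k$ makes $A(t)$ genuinely $2\pi$-periodic in each variable (since $\int_0^{2\pi} a_k = 2\pi a_{k,0}$), and $A(t)$ is real-valued. Hence $A \in \mathcal{G}^\sigma(\TT^m;\R)$: there are constants $C_0, h_0>0$ with $\sup_t |\partial_t^\gamma A(t)| \le C_0 h_0^{|\gamma|} \gamma!^\sigma$ for all $\gamma$. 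Second, I would fix $u = \sum_j u_j(t)\varphi_j(x) \in \mathcal{S}_{\sigma,\mu}$ and estimate the derivatives of the new coefficients $v_j(t) := e^{-iA(t)\lambda_j} u_j(t)$. Applying the Leibniz rule and the Faà di Bruno / Gevrey-composition estimate for $\partial_t^\gamma\big(e^{-iA(t)\lambda_j}\big)$, each such derivative of order $\gamma$ is bounded by a constant times $C_1^{|\gamma|} \gamma!^\sigma$ times a polynomial-in-$\lambda_j$ factor of the shape $\sum_{\ell \le |\gamma|}|\lambda_j|^\ell/\ell!^{?}$; the key point is that the worst power $|\lambda_j|^{|\gamma|}$ comes with a factor $\gamma!^{\mu M - 1 - \sigma}$ after using the Weyl bound $|\lambda_j| \sim \rho j^{M/2n}$ and the elementary inequality $|\lambda_j|^\ell \le \epsilon j^{1/(2n\mu)} + C_\epsilon \ell!^{\,2n\mu/M \cdot \text{(something)}}$ — this is exactly where the hypothesis $\mu M - 1 \le \sigma$ is used to absorb the extra factorial into $\gamma!^\sigma$, while the exponentially small factor $\exp(-\varepsilon j^{1/(2n\mu)})$ coming from $u_j$ (via Theorem \ref{Thm-estimate-derivatives-functions}) dominates the $|\lambda_j|^\ell$ growth uniformly. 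Collecting, $\sup_t|\partial_t^\gamma v_j(t)| \le C^{|\gamma|+1}\gamma!^\sigma \exp(-\tfrac{\varepsilon}{2} j^{1/(2n\mu)})$, so $\Psi u \in \mathcal{S}_{\sigma,\mu}$ by Theorem \ref{Thm-estimate-derivatives-functions}. Third, for the ultradistribution case I would run the dual version: using \eqref{estimate-distr}, show that $\langle v_j, \psi\rangle = \langle u_j, e^{-iA(t)\lambda_j}\psi\rangle$ and that multiplication by $e^{-iA(t)\lambda_j}$ maps $\mathcal{G}^{\sigma,h}$ boundedly into $\mathcal{G}^{\sigma, h'}$ with $\|e^{-iA\lambda_j}\psi\|_{\sigma,h'} \le C_\epsilon \exp(\epsilon j^{1/(2n\mu)})\|\psi\|_{\sigma,h}$ — again the $\mu M - 1 \le \sigma$ condition controls the loss — so that $\{v_j\}$ satisfies \eqref{estimate-distr} and $\Psi u \in \mathcal{S}'_{\sigma,\mu}$. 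Finally, replacing $A$ by $-A$ (equivalently $a_k \mapsto -a_k$, which preserves all hypotheses) gives the same bounds for $\Psi^{-1}$, and the composition identities finish the proof that $\Psi$ is an automorphism of both spaces.

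\textbf{Main obstacle.} The crux is the estimate for $\partial_t^\gamma e^{-iA(t)\lambda_j}$ and the bookkeeping that shows the interplay $\sigma \ge \mu M - 1$ is exactly what is needed. Concretely, after Faà di Bruno one gets terms like $\lambda_j^\ell \prod (\partial^{\gamma^{(i)}}A)$ with $\sum_i |\gamma^{(i)}| = |\gamma|$ and $\ell \le |\gamma|$ blocks; bounding each factor by $C_0 h_0^{|\gamma^{(i)}|}(\gamma^{(i)}!)^\sigma$ and summing over partitions costs a combinatorial factor that is $O(C^{|\gamma|}\gamma!^\sigma)$, but one is left with the bare $|\lambda_j|^\ell \le (\rho j^{M/2n})^\ell$. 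Splitting this via Young's inequality as $|\lambda_j|^\ell = \big(|\lambda_j|^{2n\mu/M}\big)^{M\ell/(2n\mu)} \le \delta\, j^{1/(2n\mu)\cdot (\text{stuff})} + C_\delta (M\ell/(2n\mu))!$ and then comparing $(M\ell/2n\mu)!$ against $\gamma!^{\sigma - (\mu M - 1)} \cdot \gamma!^{\mu M - 1}$ requires the Stirling-type inequality $(\ell!)^{a} \le C^{|\gamma|}\gamma!^{a}$ when $\ell\le|\gamma|$, together with $a := \mu M - 1 \le \sigma$ so the residual factorial is absorbed into the allowed $\gamma!^\sigma$ and the remaining $j$-growth is killed by the Gaussian decay of $\widehat{u_j}$. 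Carrying out this absorption cleanly — and doing the analogous but slightly more delicate multiplier estimate on $\mathcal{G}^{\sigma,h}$ in the dual case, where one must also track the $h$-constant — is the only real work; everything else (periodicity of $A$, reality, linearity, the inverse formula, the composition identities, and the transfer of global hypoellipticity/solvability via \eqref{conjugationformula}, already spelled out in the text) is routine.
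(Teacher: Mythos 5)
Your proposal is correct and, for the action on $\mathcal{S}_{\sigma,\mu}$, follows essentially the paper's own argument: Leibniz plus Fa\`a di Bruno for $\partial_t^\gamma e^{-iA(t)\lambda_j}$, the Gevrey bounds on the $a_k$ (and the periodicity/reality of $A$, which the paper uses tacitly), the Weyl bound $|\lambda_j|\leq C j^{M/2n}$, and the trade-off $j^{\gamma M/2n}\leq C_\epsilon^{\gamma}(\gamma!)^{M\mu}\exp(\epsilon j^{1/(2n\mu)})$, with $M\mu-1\leq\sigma$ absorbing the excess factorial and the factor $\exp(-\varepsilon j^{1/(2n\mu)})$ from Theorem \ref{Thm-estimate-derivatives-functions} killing the residual growth in $j$; your ``Young's inequality'' splitting should really be this standard multiplicative estimate $x^{a}\leq \epsilon^{-a}a!\exp(\epsilon x)$, but that is cosmetic. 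Where you genuinely diverge is the dual case: you deduce $\Psi u\in\mathcal{S}'_{\sigma,\mu}$ from the partial-coefficient characterization \eqref{estimate-distr}, by showing that multiplication by $e^{-iA(t)\lambda_j}$ maps $\mathcal{G}^{\sigma,h}(\TT^m)$ into some $\mathcal{G}^{\sigma,h'}(\TT^m)$ with operator norm $O(\exp(\epsilon j^{1/(2n\mu)}))$ uniformly in $j$, whereas the paper estimates the pairing $\langle u,\,e^{-iA(t)\lambda_j}\theta(t)\varphi_j(x)\rangle$ directly through the seminorm \eqref{secondnorm}, exploiting $P^k\varphi_j=\lambda_j^k\varphi_j$ and performing a second absorption $|\lambda_j|^{k}\leq C_\epsilon^{k}(k!)^{M\mu}\exp(\epsilon j^{1/(2n\mu)})$ against the weight $(k!)^{-M\mu}$. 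Your route is a legitimate and somewhat lighter alternative (only one multiplier estimate is needed, and it reuses the bound \eqref{est_e_A} already proved for the function case), at the cost of carefully tracking that $h'$ depends on $h$ and $\epsilon$ but not on $j$; the paper's route keeps everything inside the single seminorm computation. The remaining points (linearity, the inverse formula with $A$ replaced by $-A$, the composition identities) are routine in both treatments.
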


\begin{proof}
Let us show that $\Psi: \mathcal{S}_{\sigma,\mu} \to \mathcal{S}_{\sigma,\mu}$ is well-defined. 
Let $f = \sum_{j \in \mathbb{N}} f_j(t)\varphi_j(x)\in \mathcal{S}_{\sigma,\mu}$, and fix $\alpha \in \Z^m_+$.
Denoting $\psi_j(t) = \exp\left(-i A(t) \lambda_j\right) f_j(t)$, by Leibniz formula, 
$$
|\partial_t^{\alpha}\psi_j(t)| \leq \sum_{\beta \leq \alpha}
\binom{\alpha}{\beta} 
|\partial_t^{\beta}\exp \left(i A(t) \lambda_j\right)|
|\partial_t^{\alpha-\beta} f_j(t)|.
$$

By Theorem \ref{Thm-estimate-derivatives-functions}, there exist $\epsilon, C > 0$ such that
\begin{equation*}
	\sup_{t \in \mathbb{T}^m} | \partial_t^{\alpha-\beta} f_j(t)| \leq
	C^{|\alpha-\beta|+1} [(\alpha-\beta)!]^{\sigma} \exp \left[-\varepsilon j^{\frac{1}{2n\mu}} \right].
\end{equation*}    

Note that
$$
|\partial_t^{\beta}\exp \left(i A(t) \lambda_j\right)| = 
\prod_{k=1}^{m} \left| \partial_{t_k}^{\beta_k} \exp \left[-i \lambda_j \left(\int_{0}^{t_k} a_k(\eta)d\eta - a_{k,0}t_k\right)\right]\right|,
$$
where $\beta = (\beta_1, \ldots, \beta_m)$. Denoting
$$
\mathcal{H}_j(t_k) = \exp\left[-i\lambda_j \left( \int_{0}^{t_k} a_k(\eta)\, d\eta  - a_{k,0}t_k\right)\right],
$$
we obtain from Fa\`a di Bruno's formula
$$
\partial_{t_k}^{\beta_k} \mathcal{H}_j(t_k)= \sum_{\Delta(\gamma), \, \beta_k}
\dfrac{1}{\gamma!}(-i\lambda_j)^\gamma
\frac{\beta_k!}{s_1! \cdots s_\gamma! }
\left(\prod_{\nu=1}^\gamma
\partial_{t_k}^{s_\nu-1}(a_k(t_k)-a_{k,0}) \right)
\mathcal{H}_j(t_k),
$$
where
$
\sum\limits_{\Delta(\gamma), \, \beta_k} = \sum\limits_{\gamma=1}^{\beta_k}\sum\limits_{\stackrel{s_1+\ldots+s_\gamma=\beta_k}{s_\nu \geq 1, \forall \nu}}.
$

Since
$$
\left| \prod_{\nu=1}^\gamma
\partial_{t_k}^{s_\nu-1}(a_k(t_k)-a_{k,0})  \right| \leq C^{\beta_k-\gamma+1}[(\beta_k -\gamma)!]^\sigma,
$$
and $|\lambda_j| \leq C_1 j^{M /2n}$, we get
\begin{align*}
	|\partial_{t_k}^{\beta_k} \mathcal{H}_j(t_k)| & \leq 
	\sum_{\Delta(\gamma), \, \beta_k}|\lambda_j|^\gamma
	\dfrac{1}{\gamma!}
	\frac{\beta_k!}{s_1! \cdots s_\gamma! }
	C^{\beta_k-\gamma+1}[(\beta_k -\gamma)!]^\sigma \\
	& \leq 
	\sum_{\Delta(\gamma), \, \beta_k}C_1^{\gamma}j^{\frac{\gamma M}{2n}}
	\dfrac{1}{\gamma!} 
	\frac{\beta_k!}{s_1! \cdots s_\gamma! }
	C^{\beta_k-\gamma+1}[(\beta_k -\gamma)!]^\sigma \\
\end{align*}

In view of
\begin{equation*}
	j^{\frac{\gamma M}{2n}} 
	\leq C_{\epsilon}^{\gamma}(\gamma !)^{M\mu} \exp \left(\epsilon/(2m) j^{\frac{1}{2n\mu}} \right),
\end{equation*}
it follows that
$$
|\partial_{t_k}^{\beta_k} \mathcal{H}_j(t_k)| \leq C_2^{\beta_k + 1} \exp \left(\epsilon/(2m)j^{\frac{1}{2n\mu}} \right) (\beta_k !)^{\sigma},
$$
since we have assumed $M\mu - 1 \leq \sigma$. 

Therefore, 
\begin{equation}\label{est_e_A}
	|\partial_t^{\beta}\exp \left(i A(t) \lambda_j\right)| \leq 
	C_2^{|\beta|+ 1}
	\exp \left(\epsilon/2 \, j^{\frac{1}{2n\mu}} \right) (\beta!)^{\sigma}
\end{equation}
implying
$$
|\partial^{\alpha}_t \psi_j(t)|\leq C^{|\alpha| +1} (\alpha !)^{\sigma} 
\exp \left(-\epsilon/2 \, j^{\frac{1}{2n\mu}} \right),
$$	 
and consequently, $\Psi f \in  \mathcal{S}_{\sigma,\mu}$.

\bigskip	
Now, let us show that $\Psi: \mathcal{S}_{\sigma,\mu}' \to \mathcal{S}_{\sigma,\mu}'$ is well-defined. Given $u \in \mathcal{S}_{\sigma,\mu}'$, we need to verify that $\Psi u \in \mathcal{S}_{\sigma,\mu}'$. To do this, it is sufficient to prove that for every $\epsilon,h>0$, there is $C_{\epsilon,h} >0$ satisfying
\begin{equation*}
	|\langle  \psi_j(t) \, , \, \theta(t) \rangle | \leq C_{\epsilon,h} \|\theta\|_{\sigma, h} \exp \left(\epsilon j^{\frac{1}{2n\mu}}\right),
\end{equation*}
for all $j \in \mathbb{N}$, and for all $\theta \in \mathcal{G}^{\sigma,h}(\TT^m)$, cf. \cite{AviCap22}.

Let $\epsilon, h>0$ be fixed. Given $\theta \in \mathcal{G}^{\sigma,h}(\TT^m)$, it follows from \eqref{est_e_A} that
\begin{align*}
	|\partial_t^{\alpha}\left[\exp\left(-i A(t) \lambda_j\right)\theta(t)\right]| & \leq   \exp\left(\epsilon/2 j^{\frac{1}{2n\mu}} \right)
	\sum_{\beta \leq \alpha} \binom{\alpha}{\beta}  C^{|\beta|+ 1}
	(\beta!)^{\sigma} |\partial_t^{\alpha -\beta}\theta(t)| \\
	& \leq  \exp\left(\epsilon/2 j^{\frac{1}{2n\mu}} \right) \sum_{\beta \leq \alpha} \binom{\alpha}{\beta}  C^{|\beta|+ 1}
	(\beta!)^{\sigma} \|\theta\|_{\sigma,h}((\alpha - \beta)!)^{\sigma}h^{|\alpha-\beta|} \\
	& \leq  \exp\left(\epsilon/2 j^{\frac{1}{2n\mu}} \right) C_{\epsilon,h}^{|\alpha|} (\alpha!)^{\sigma} \|\theta\|_{\sigma,h}.
\end{align*} 

By the characterization of $\mathcal{S}_{\sigma,\mu}$ in terms of semi-norms \eqref{secondnorm}, we have the following: for every $A>0$, there is $C_A>0$ such that
\begin{equation*}
	|\langle  u\, , \, \Theta(t,x) \rangle | 
	\leq C_A \sup\limits_{\substack{\alpha \in \N^m \\ k \in \N}}    
	A^{-k - |\alpha|} (k!)^{-M \mu} (\alpha!)^{-\sigma} \|P^k \partial_t^{\alpha}\Theta(t,x)\|_{L^2(\T^m\times \R^n)},
\end{equation*}
for all $\Theta \in \mathcal{S}_{\sigma,\mu}$. Hence, setting
$$
\Omega_{j}(t)=\langle  \psi_j(t) \, , \, \theta(t) \rangle = \langle  u \, , \, \exp\left(-i\lambda_j A(t) \right)\theta(t)\varphi_j(x) \rangle,
$$
we obtain
	\begin{align*}
		|\Omega_{j}(t)| 
		& \leq 
		C_{A} \sup\limits_{\stackrel{\alpha \in \Z^m_+}{k \in \N}}	
		A^{-k - |\alpha|} (k!)^{-M \mu} (\alpha!)^{-\sigma} \|P^k \partial_t^{\alpha}\left[\exp\left(-i A(t) \lambda_j\right)\theta(t)\varphi_j(x) \right]\|_{L^2(\T^m\times \R^n)}\\
		& = 
		C_{A} \sup\limits_{\stackrel{\alpha \in \Z^m_+}{k \in \N}}	
		A^{-k - |\alpha|} (k!)^{-M \mu} (\alpha!)^{-\sigma}  |\partial_t^{\alpha}\left[\exp\left(-i A(t) \lambda_j\right)\theta(t)\right]| \|P^k\varphi_j(x) \|_{L^2(\T^m\times \R^n)} \\
		& =
		C_{A} \sup\limits_{\stackrel{\alpha \in \Z^m_+}{k \in \N}}	
		A^{-k - |\alpha|} (k!)^{-M \mu} (\alpha!)^{-\sigma}  |\partial_t^{\alpha}\left[\exp\left(-i A(t) \lambda_j\right)\theta(t)\right]| |\lambda_j|^k \\
		& \leq
		C_{A} \sup\limits_{\stackrel{\alpha \in \Z^m_+}{k \in \N}}	
		A^{-k - |\alpha|} (k!)^{-M \mu} (\alpha!)^{-\sigma}   C^{k}j^{kM/2n } |\partial_t^{\alpha}\left[\exp\left(-i A(t) \lambda_j\right)\theta(t)\right]|  \\
		& \leq
		C_{A} \sup\limits_{\stackrel{\alpha \in \Z^m_+}{k \in \N}}	
		A^{-k - |\alpha|} (k!)^{-M \mu} (\alpha!)^{-\sigma}   C^{k} C_{\epsilon}^{k}(k!)^{M\mu} \exp\left(\epsilon j^{\frac{1}{2n\mu}} \right) C_{\epsilon,h}^{|\alpha|} (\alpha!)^{\sigma} \|\theta\|_{\sigma,h}  \\
		& = C_{A} \|\theta\|_{\sigma,h} \exp\left(\epsilon j^{\frac{1}{2n\mu}} \right) \sup\limits_{\stackrel{\alpha \in \Z^m_+}{k \in \N}} A^{-k - |\alpha|}  C_{\epsilon,h}^{|\alpha|} C_{\epsilon}^{k}.
	\end{align*}
	
Then, for $A^{-1} = \max\{C_{\epsilon}, C_{\epsilon,h}\}$,  
$$
|\langle  \psi_j(t) \, , \, \theta(t) \rangle|\leq \widetilde{C}_{\epsilon,h}  \|\theta\|_{\sigma,h} \exp\left(\epsilon j^{\frac{1}{2n\mu}} \right).
$$
To finish the proof, it is easy to verify that $\Psi^{-1}$ is the inverse of $\Psi$, and that both are linear, which concludes the proof.
\end{proof}

From expressions \eqref{conjugation} and \eqref{conjugation_inverse}, which define the expressions of $\Psi$ and its inverse, it is clear that $L_r \circ \Psi = \Psi \circ L_{r,0}$ for every $1\leq r \leq m$. In other words, the automorphism $\Psi$ satisfies the property stated in \eqref{conjugationformula}.

To conclude the proof of global solvability, we only need to prove that the sets of functions admissible for solvability are isomorphic under the action of the map $\Psi$.

\begin{proposition}\label{Theorem-Reeduc}
	The map 
	$$
	(f_1, \ldots, f_m)\in \mathscr{E}_{{\LL}_{a_0}}  \longmapsto (\Psi f_1, \ldots, \Psi f_m) \in \mathscr{E}_{{\LL}_{a}},
	$$
	is an isomorphism.
\end{proposition}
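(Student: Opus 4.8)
The plan is the following. Since, by Proposition \ref{Theorem-Psi}, $\Psi$ is an automorphism of $\mathcal{S}_{\sigma,\mu}$ with inverse $\Psi^{-1}$, the componentwise map $(f_1,\dots,f_m)\mapsto(\Psi f_1,\dots,\Psi f_m)$ is automatically a linear bijection of $\mathcal{S}_{\sigma,\mu}^m$ onto itself, continuous together with its inverse; hence it suffices to prove that it carries the two defining conditions of $\mathscr{E}_{{\LL}_{a_0}}$ onto the two defining conditions of $\mathscr{E}_{{\LL}_{a}}$, that is, \eqref{commutation-0}$\Leftrightarrow$\eqref{commutation-a} and \eqref{compatib-0}$\Leftrightarrow$\eqref{compatib-a} under the substitution $f_r=\Psi g_r$. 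Once this is done, the map restricts to a bijection $\mathscr{E}_{{\LL}_{a_0}}\to\mathscr{E}_{{\LL}_{a}}$ whose inverse is the componentwise $\Psi^{-1}$, and the proof is complete.

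For the commutation conditions I would use the conjugation identity $L_r\circ\Psi=\Psi\circ L_{r,0}$, equivalently \eqref{conjugationformula}, recorded right after Proposition \ref{Theorem-Psi}. If $(g_1,\dots,g_m)$ satisfies \eqref{commutation-0}, applying $\Psi$ to $L_{r,0}g_s=L_{s,0}g_r$ yields $\Psi L_{r,0}g_s=\Psi L_{s,0}g_r$; since $\Psi L_{r,0}=L_r\Psi$, this is exactly $L_r(\Psi g_s)=L_s(\Psi g_r)$, namely \eqref{commutation-a}. The argument reverses verbatim with $\Psi^{-1}$ in place of $\Psi$, giving the equivalence.

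The substantive point is the compatibility condition. Writing $f_r=\Psi g_r$, so that $f_{r,j}(t)=\exp(-iA(t)\lambda_j)\,g_{r,j}(t)$ with $A(t)$ as in \eqref{A(t)definition}, I would first simplify the exponent appearing in \eqref{compatib-a}: since the $k=r$ term of $A(t)$ equals $\int_0^{t_r}a_r(\eta)\,d\eta-a_{r,0}t_r$, one obtains $\int_0^{t_r}a_r(\eta)\,d\eta-A(t)=a_{r,0}t_r-\sum_{k\neq r}\big(\int_0^{t_k}a_k(\eta)\,d\eta-a_{k,0}t_k\big)$. Consequently the integrand in \eqref{compatib-a} factors as $\exp\!\big(-i\lambda_j\sum_{k\neq r}(\int_0^{t_k}a_k(\eta)\,d\eta-a_{k,0}t_k)\big)$, which does not depend on $t_r$, times $\exp(i\lambda_j a_{r,0}t_r)\,g_{r,j}(t)$. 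Assume $a_{r,0}\lambda_j=n\in\Z$. Expanding $g_{r,j}$ in its Fourier series and using $\int_0^{2\pi}e^{i(n+\tau_r)t_r}\,dt_r=2\pi\,\delta_{\tau_r,-n}$, the integral over $t_r$ equals $2\pi$ times the sum over $\tau_k$, $k\neq r$, of $\widehat{g}_{r,j}(\tau_1,\dots,-n,\dots,\tau_m)\,e^{i\sum_{k\neq r}\tau_k t_k}$. Now $\tau_r+a_{r,0}\lambda_j=0$ means precisely $\tau_r=-n$, so condition \eqref{compatib-0} forces all of these coefficients to vanish, whence \eqref{compatib-a} holds for $(\Psi g_1,\dots,\Psi g_m)$. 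Each step is an identity (or, after restoring the nonvanishing factor $\exp(-i\lambda_j\sum_{k\neq r}(\cdots))$, an equivalence), so the implication also runs backwards, i.e. $\Psi^{-1}$ sends \eqref{compatib-a} to \eqref{compatib-0}.

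Combining the two equivalences, $\Psi$ maps $\mathscr{E}_{{\LL}_{a_0}}$ into $\mathscr{E}_{{\LL}_{a}}$ and $\Psi^{-1}$ maps $\mathscr{E}_{{\LL}_{a}}$ into $\mathscr{E}_{{\LL}_{a_0}}$, so the componentwise map is a bijection between these sets; linearity is immediate and bicontinuity follows from Proposition \ref{Theorem-Psi}, so it is an isomorphism. I expect the only delicate part to be the bookkeeping in the compatibility computation: keeping track of which variables the surviving exponential factors depend on, and matching an integral over the single variable $t_r$ with the vanishing of the full Fourier coefficients $\widehat{g}_{r,j}(\tau)$ along the slice $\tau_r=-a_{r,0}\lambda_j$. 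After the exponent $\int_0^{t_r}a_r(\eta)\,d\eta-A(t)$ has been reduced, this amounts to nothing more than orthogonality of the characters $t_r\mapsto e^{i\tau_r t_r}$.
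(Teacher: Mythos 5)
Your proposal is correct and follows essentially the same route as the paper: the commutation conditions are transferred via the conjugation identity $L_r\circ\Psi=\Psi\circ L_{r,0}$, and the compatibility conditions via the simplification of the exponent $\int_0^{t_r}a_r(\eta)\,d\eta-A(t)$, Fourier expansion in $t$, and orthogonality of the characters $e^{i\tau_r t_r}$, with \eqref{compatib-0} killing the resonant slice $\tau_r=-a_{r,0}\lambda_j$. The only (harmless) difference is that you obtain the converse by observing that this computation is a chain of equivalences, whereas the paper verifies it separately by expanding $\Psi^{-1}g_r$ and checking \eqref{compatib-0} directly.
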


\begin{proof} 
	Given $(f_1, \ldots, f_m) \in \mathscr{E}_{{\LL}_{0}}$ and $(g_1, \ldots, g_m) \in \mathscr{E}_{{\LL}_{a}}$,  we have
	\begin{align*}
		&L_r(\Psi f_s) = \Psi (L_{r,0} f_s) = \Psi (L_{s,0} f_r) = L_s(\Psi f_r), \text{ and} \\
		&L_{r,0}(\Psi^{-1} g_s) = \Psi^{-1} (L_r g_s) = \Psi^{-1}(L_{s} g_r) = L_{s,0}(\Psi^{-1} g_r),		
	\end{align*}
	for any $r,s\in\{1,\ldots,m\}$. Therefore, the properties \eqref{commutation-a} and \eqref{commutation-0} are preserved by $\Psi$. 
	
	Next, we prove that \eqref{compatib-a} and \eqref{compatib-0} are also preserved by $\Psi$. For $(f_1, \ldots, f_m) \in \mathscr{E}_{{\LL}_{0}}$, we define
	$$
	\psi_{r,j}(t) = \exp \left(-i A(t) \lambda_j\right) f_{r,j}(t),
	$$
	and 
	$$
	\Theta_{r,j}(t) =  	\int_{0}^{2\pi} \exp\left(i \lambda_j \int_{0}^{t_r}a_r(\eta)d\eta\right) \psi_{r,j}(t) dt_{r},
	$$
	for $1\leq r\leq m$, $j\in\N$, and $t=(t_1,\ldots,t_m)\in\TT^m$. We recall that the function $A$ was defined in \eqref{A(t)definition}.

	Observe that
	\begin{align*}
		\Theta_{r,j}(t) &= \sum_{\tau \in\Z^m}  \exp\left( \sum_{k \neq r} \int_{0}^{t_k} a_k(\eta)d\eta - a_{k,0}t_k\right)\widehat{f_{r,j}}(\tau) \int_{0}^{2\pi}\exp \left[it_{r} (\lambda_j a_{r,0} + \tau_r) \right] dt_{r}.
	\end{align*}

	Assume $a_{r,0} \lambda_j \in \mathbb{Z}$. If $\lambda_j a_{r,0} + \tau_r = 0$, then by \eqref{compatib-0}, we have $\widehat{f_{r,j}}(\tau) = 0$, and consequently, $\Theta_{r,j}(t) = 0$. 

	For $\lambda_j a_{r,0} + \tau_r \neq  0$, we have
	$$
	\int_{0}^{2\pi}\exp \left[it_{r} (\lambda_j a_{r,0} + \tau_r) \right] dt_{r} = 0,
	$$	
	implying $\Theta_{r,j}(t) = 0$. Therefore, $\Psi f_r$ satisfies \eqref{compatib-a}, implying $(\Psi f_1, \ldots, \Psi f_m) \in  \mathscr{E}_{{\LL}_{a}}$.

	Conversely, for $(g_1, \ldots, g_m) \in  \mathscr{E}_{{\LL}_{a}}$, define
	$$
		\Gamma_{r,j}(t) = \exp \left(i A(t) \lambda_j\right) g_{r,j}(t),
	$$
	for $1\leq r\leq m$, $j\in\N$, and $t=(t_1,\ldots,t_m)\in\TT^m$. 

	Observe that
	\begin{align*}
		\widehat{\Gamma_{r,j}}(\tau) & = \dfrac{1}{(2 \pi)^{m}} \int_{\TT^m} \prod_{k=1}^{m}\left\{	\exp\left(i \lambda_j \int_{0}^{t_k}a_k(\zeta)d\zeta\right) \exp(-i\sigma_{L_{r,0}}(\tau)t_{k})\right\}g_{r,j}(t)dt \\
		& = \dfrac{1}{(2 \pi)^m} \int\limits_{\TT^{m-1}} H_{k,j} \left\{\int_{\TT_r} \exp\left(i \lambda_j \int_{0}^{t_r}a_r(\zeta)d\zeta\right) \exp(-i\sigma_{L_{r,0}}(\tau)t_{r})g_{r,j}(t)dt_r\right\}dt_{-r},
\end{align*}
where $dt_{-r}$ denotes integration with respect to all variables $t_k$ except $t_r$, and
$$
H_{k,j} = 
\prod_{k\neq r}\left\{
\exp\left(i \lambda_j \int_{0}^{t_k}a_k(\zeta)d\zeta\right)
\exp(-i\sigma_{L_{r,0}}(\tau)t_{k})\right\}.
$$	

Hence, $\sigma_{L_{r,0}}(\tau) = 0$ implies  $\widehat{\Gamma_{r,j}}(\tau)=0$ since 
$$
\int_{\TT_r} \exp\left(i \lambda_j \int_{0}^{t_r}a_r(\zeta)d\zeta\right) g_{r,j}(t)dt_r = 0,
$$
and consequently, $\Psi^{-1}g_{r,j}$ satisfies \eqref{compatib-0}.
\end{proof}

\section*{Acknowledgements}

	The first and third authors thank the support provided by the National Council for Scientific and Technological Development - CNPq, Brazil (grants 316850/2021-7, 423458/2021-3, 402159/2022-5, 305630/2022-9, and 200295/2023-3). Additionally, they express their gratitude for the hospitality extended to them during their visit to the Department of Mathematics at the University of Turin, Italy, where most of this work was developed. The second author is partially supported by the INDAM-GNAMPA project CUP E53C23001670001.

\end{document}